\newtheorem{theorem}{THEOREM}[section]
\newtheorem{Corollary}[theorem]{Corollary} 
\newtheorem{lemma}[theorem]{Lemma}
\newtheorem{proposition}[theorem]{Proposition}
\newenvironment{remark}{\begin{trivlist}
\item[\hspace{\labelsep}{\noindent\em Remark:}]
}{\end{trivlist}}
\newenvironment{proof}{\begin{trivlist}
\item[\hspace{\labelsep}{\noindent\bf Proof.}]\rm
}{\end{trivlist}}
\def\cuadro{\hfill \mbox{$\square$}}
\newcommand{\bulito}{\ {\scriptstyle \bullet}\, \ }
\newcommand{\bulp}{{\scriptscriptstyle \bullet}}
\newcommand{\diagp}{\hbox{\scriptsize\rm diag}}
\renewcommand{\dim}{\hbox{\rm dim}}
\newcommand{\Hom}{\hbox{\rm Hom}}
\newcommand{\mod}{\hbox{\rm mod}}
\newcommand{\Mod}{\hbox{\rm Mod}}
\newcommand{\ord}{\hbox{\rm ord}}
\newcommand{\ordp}{\hbox{\scriptsize\rm ord}}
\newcommand{\rad}{\hbox{\rm rad}}
\newcommand{\unog}{\hbox{\bf 1}}
\newcommand{\id}{\hbox{\rm id}}
\def\aderecha#1{\smash{\mathop{\longrightarrow}\limits^{#1}}}
\newcommand{\N}{\mathbb{N}}
\begin{document}

\title{Covering functors without groups}

\date{ }
\author{Jos\'e A. de la Pe\~na and Mar\'\i a Julia Redondo
\thanks{The research for this paper was initiated during a visit of the second named author to UNAM, M\'exico.}}
\maketitle

\begin{abstract}
Coverings in the representation theory of algebras were introduced for the Auslander-Reiten quiver of a representation finite algebra in \cite{10} and later for finite dimensional algebras in \cite{2,4,5}. The best understood class of {\em covering functors} is that of    {\em Galois covering functors} $F: A \to B$ determined by the action of a group of automorphisms of $A$. In this work we introduce the {\em balanced covering functors} which include the Galois class and for which classical Galois covering-type results still hold. For instance, if $F:A \to B$ is a balanced covering functor, where $A$ and $B$ are linear categories over an algebraically closed field, and $B$ is tame, then $A$ is tame.   
\end{abstract}

\section*{Introduction and notation}

Let $k$ be a field and $A$ be a finite dimensional (associative with
$1$) $k$-algebra. One of the main goals of the {\em representation
theory of algebras\/} is the description of the category of finite
dimensional left modules ${_A\mod}$. For that purpose it is important
to determine the representation type of $A$. The finite
representation type (that is, when $A$ accepts only finitely many
indecomposable objects in ${_A\mod}$, up to isomorphism) is well
understood. In that context, an important tool is the construction of Galois
coverings $F\colon \tilde{A}\to A$ of $A$ since $\tilde{A}$ is a
locally representation-finite category if and only if $A$ is
representation-finite \cite{4,9}. For a tame algebra $A$ and a Galois covering 
$F\colon \tilde{A}\to A$, the category $\tilde{A}$ is also tame, but the converse does not hold \cite{7,15}.

Coverings were introduced in \cite{10} for the Auslander-Reiten
quiver of a representation-finite algebra. For algebras of
the form $A=kQ/I$, where $Q$ is a quiver and $I$ an admissible ideal
of the path algebra $kQ$, the notion of covering was introduced in
\cite{2,4,5}. Following \cite{2}, a functor $F\colon A\to
B$, between two locally bounded $k$-categories $A$
and $B$, is a {\em covering functor\/} if the following
conditions are satisfied:
\begin{itemize}
\item[(a)] $F$ is a $k$-linear functor which is onto on objects;
\item[(b)] the induced morphisms
$$\bigoplus\limits_{Fb'=j}A(a,b')\to B(Fa,j)\hbox{
and }\bigoplus\limits_{Fa'=i} A(a',b)\to B(i,Fb)$$
are bijective for all $i,j$ in $B$ and $a,b$ in $A$. 
\end{itemize}
We denote by
$({_{b'}f^{\bulp}_a})_{b'}\mapsto f$ and $({^\bulp_bf_{a'}})_{a'}\mapsto f$
the corresponding bijections. We shall consider $F_\lambda \colon
{_A\mod} \to {_B\mod}$ the left adjoint to the pull-up functor 
$F_{\bulp}\colon {_B\mod} \to {_A\mod}$, $M\mapsto M F$, where ${_C\mod}$ denotes the category of left modules over the $k$-category $C$, consisting of covariant $k$-linear functors.

The best understood examples of covering functors are the
{\em Galois covering functors\/} $A \to B$ given by the action of
a group of automorphisms $G$ of $A$ acting freely on objects and
where $F\colon A \to B=A/G$ is the quotient defined by the
action. See \cite{2,3,4,5,9} for results on Galois coverings. Examples of coverings which are not of Galois type will be exhibited in Section \ref{sec1}.

In this work we introduce {\em balanced coverings\/} as those coverings
$F\colon A \to B$ where ${_bf^{\bulp}_a} ={^{\bulp}_bf_a}$ for every 
$f\in B(Fa,Fb)$. Among many other examples, Galois coverings are balanced, see Section \ref{sec2}. We shall prove the following:

\begin{theorem}\label{teorema 1} 
Let $F\colon A \to B$ be a balanced covering. Then every finitely generated
$A$-module $X$ is a direct summand of $F_{\bulp}F_\lambda X$.
\end{theorem}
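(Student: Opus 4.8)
The plan is to use the explicit description of the push-down functor afforded by the covering conditions. First I would record that, since $F$ is a covering functor, the left Kan extension $F_\lambda$ admits the pointwise formula $(F_\lambda X)(j)=\bigoplus_{Fb'=j}X(b')$: indeed $(F_\lambda X)(j)=\int^{a}B(Fa,j)\otimes_k X(a)$, and substituting the bijection $B(Fa,j)\cong\bigoplus_{Fb'=j}A(a,b')$ of condition (b) together with the co-Yoneda (density) isomorphism $\int^{a}A(a,b')\otimes_k X(a)\cong X(b')$ yields the claim (finite generation of $X$ guarantees we remain inside the module categories where the adjunction and this formula are valid). Pulling back gives $(F_{\bulp}F_\lambda X)(a)=(F_\lambda X)(Fa)=\bigoplus_{Fb'=Fa}X(b')$, in which the summand indexed by $b'=a$ is a canonical copy of $X(a)$.

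Next I would produce the two structure maps. For the section I take the unit $\eta_X\colon X\to F_{\bulp}F_\lambda X$ of the adjunction $F_\lambda\dashv F_{\bulp}$. Unwinding it through the formula above, $\eta_{X,a}(x)=[\,\id_{Fa}\otimes x\,]$, and decomposing $\id_{Fa}=\sum_{Fb'=Fa}F({_{b'}(\id_{Fa})^{\bulp}_{a}})$ forces ${_{b'}(\id_{Fa})^{\bulp}_{a}}=\id_a$ when $b'=a$ and $0$ otherwise, by injectivity of the bijection in (b). Hence $\eta_{X,a}$ is exactly the inclusion of $X(a)$ into the $b'=a$ summand. For the retraction I define $\pi\colon F_{\bulp}F_\lambda X\to X$ objectwise as the projection $\pi_a\colon\bigoplus_{Fb'=Fa}X(b')\to X(a)$ onto that same summand, so that $\pi_a\eta_{X,a}=\id_{X(a)}$ is immediate.

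The crux is to verify that $\pi$ is a morphism of $A$-modules, that is, natural. For $g\in A(a,b)$ the action $(F_{\bulp}F_\lambda X)(g)=(F_\lambda X)(Fg)$ is, in the direct-sum coordinates, the matrix whose entry from the $b'$-summand to the $b''$-summand is $X({_{b''}(Fg)^{\bulp}_{b'}})\colon X(b')\to X(b'')$. Naturality of $\pi$ amounts to showing that the entries landing in the target summand $b''=b$ satisfy ${_{b}(Fg)^{\bulp}_{b'}}=g$ for $b'=a$ and ${_{b}(Fg)^{\bulp}_{b'}}=0$ for $b'\neq a$. Here the balanced hypothesis is precisely what is needed: decomposing $Fg$ over the source by the second bijection in (b) gives ${^{\bulp}_{b}(Fg)_{a'}}=g$ for $a'=a$ and $0$ for $a'\neq a$ (again by injectivity, since $g\in A(a,b)$ maps to $Fg$), and the balanced identity ${_{b}f^{\bulp}_{a}}={^{\bulp}_{b}f_{a}}$ identifies ${_{b}(Fg)^{\bulp}_{b'}}={^{\bulp}_{b}(Fg)_{b'}}$, which therefore vanishes for $b'\neq a$ and equals $g$ for $b'=a$. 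Thus $\pi_b\circ(F_\lambda X)(Fg)=X(g)\circ\pi_a$, so $\pi$ is natural, and $\pi\circ\eta_X=\id_X$ exhibits $X$ as a direct summand of $F_{\bulp}F_\lambda X$.

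I expect the main obstacle to be the bookkeeping in this naturality computation: correctly matching the two bijections of condition (b) through the co-Yoneda identification so that the off-diagonal matrix entries are seen to be governed by the balanced identity. Once the coordinates are set up, balancedness closes the argument cleanly, and it is exactly the point at which the balanced hypothesis is used in an essential way.
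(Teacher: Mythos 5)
Your proof is correct, and it is organized differently from the paper's. You verify the splitting directly on an arbitrary module: the section is the adjunction unit (hence automatically a module map), and all the work goes into showing that the objectwise diagonal projection $\pi$ is a morphism of $A$-modules, which you reduce, via the balanced identity ${}_{b}(Fg)^{\bulp}_{b'}={}^{\bulp}_{b}(Fg)_{b'}$ together with injectivity of the source-decomposition, to the vanishing of the off-diagonal entries. The paper takes the mirror route and proves a stronger, functorial statement: it first shows (Section \ref{subsec3.1}) that $f\mapsto{}^{\bulp}_{b}f_{a}$ is a retraction of $F(a,b)\colon A(a,b)\to B(Fa,Fb)$ natural in both variables, and then (Corollary \ref{cleaving}) that $F$ is a cleaving functor in the sense of \cite{1}: the section $\varepsilon\colon\unog\to F_{\bulp}F_\lambda$ is defined on the representables $A(a,-)$ as the canonical inclusion, its naturality is checked there (this is where balancedness enters, through the same identification of source- and target-decompositions as in your computation), and it is then extended to all modules using that $F_{\bulp}F_\lambda$ is exact and preserves direct sums and projectives. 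So in the paper the transformation taken as given is the projection $\varphi$ (from \cite{1}) and the one constructed is the inclusion, whereas for you the given one is the inclusion (the unit) and the constructed one is the projection; both arguments hinge on the identical balancedness computation killing ${}_{b}(Fg)^{\bulp}_{b'}$ for $b'\ne a$. What the paper's formulation buys is naturality of the splitting in $X$ (the cleaving property), which is what Section \ref{sec4} actually uses for the representation-finite and tame transfer theorems; what yours buys is a self-contained, elementary argument on an arbitrary module, with no reduction to projectives and no appeal to exactness or preservation properties. Note that your $\pi$ and $\eta$ are in fact also natural in $X$ (the push-down acts diagonally on morphisms), so the full cleaving statement would follow from your argument with one more routine check.
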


In fact, according to the notation in \cite{1}, we show that a balanced covering
functor is a {\em cleaving\/} functor, see Section \ref{sec3}. This is essential for
extending Galois covering-type results to more general situations.
For instance we show the following result.

\begin{theorem}\label{teorema 2} 
Assume that $k$ is an algebraically closed field and let $F\colon A \to B$ be a covering functor.
Then the following hold: 
\begin{itemize}
\item[{\rm (a)}] If $F$ is induced from a map $f:(Q,I) \to (Q',I')$ of quivers with relations, 
where $A=kQ/I$ and $B=kQ'/I'$, then $B$ is locally representation-finite if and only if so is $A$;
\item[{\rm (b)}] If $F$ is balanced and $B$ is tame, then $A$ is tame.
\end{itemize}
\end{theorem}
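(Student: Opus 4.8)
The plan is to push everything through the adjoint pair $(F_\lambda, F_{\bulp})$ together with the splitting of $\id_X$ through $F_{\bulp}F_\lambda X$: for balanced $F$ this is precisely Theorem \ref{teorema 1}, and for a covering induced from a map of quivers with relations it is the cleaving property of such functors recorded in \cite{1}. Two elementary computations will be used repeatedly. First, $F_{\bulp}$ is exact, being precomposition with $F$, with $(F_{\bulp}N)(c)=N(Fc)$; hence its left adjoint $F_\lambda$ preserves projectives, and by the covering bijection (b) it sends indecomposable projectives to indecomposable projectives. Second, $F_\lambda$ preserves total dimension, since $(F_\lambda X)(j)=\bigoplus_{Fc=j}X(c)$ gives $\dim F_\lambda X=\dim X$ whenever $X$ has finite support.

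For (a), assume first that $B$ is representation-finite, with $N_1,\dots,N_r$ a complete list of indecomposable $B$-modules. For an indecomposable $A$-module $X$ the cleaving property gives $X\mid F_{\bulp}F_\lambda X$; writing $F_\lambda X\cong\bigoplus_i N_i^{a_i}$ and using that $F_{\bulp}$ commutes with finite sums, Krull--Schmidt forces $X$ to be a summand of some $F_{\bulp}N_i$. Since $(F_{\bulp}N_i)(c)=N_i(Fc)$ is finite-dimensional at every object $c$, for fixed $c$ only finitely many indecomposable summands of $F_{\bulp}N_i$ are nonzero at $c$; as $A$ is locally bounded these local decompositions assemble, and only finitely many isoclasses of indecomposable $X$ with $X(c)\neq0$ can occur, so $A$ is locally representation-finite. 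For the reverse implication, assume $A$ is locally representation-finite. Because $F_\lambda$ is right exact and carries projective presentations of $B$-modules to pushdowns of projective presentations of $A$-modules, every indecomposable $B$-module $N$ is a summand of $F_\lambda\tilde M$ for some indecomposable $A$-module $\tilde M$ whose support lies over the finite support of $N$. Here the combinatorial hypothesis enters essentially: the covering of quivers provides a fundamental-group action on the fibers that lets one reduce the possible $\tilde M$ to finitely many modulo this action, so local representation-finiteness of $A$ leaves only finitely many such $N$, and $B$ is representation-finite. This is exactly the content of the representation-finiteness transfer for cleaving functors in \cite{1}.

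For (b), fix a dimension $d$. Tameness of $B$ furnishes finitely many $B$--$k[t]$-bimodules $L_1,\dots,L_s$, free of finite rank over $k[t]$, such that all but finitely many indecomposable $B$-modules of dimension $\le d$ are of the form $L_i\otimes_{k[t]}k[t]/(t-\lambda)$. Since $F_\lambda$ preserves dimension, an indecomposable $A$-module $X$ with $\dim X=d$ yields $F_\lambda X$ of dimension $d$, whose indecomposable constituents therefore lie in these families; and $X\mid F_{\bulp}F_\lambda X$ by Theorem \ref{teorema 1}. Applying $F_{\bulp}$ on the left to each $L_i$ produces $A$--$k[t]$-bimodules $F_{\bulp}L_i$, and restricting to each of the finitely many finite sets of objects of $A$ that can support a $d$-dimensional summand turns these into genuine one-parameter families $F_{\bulp}L_i\otimes_{k[t]}k[t]/(t-\lambda)$ of finite-dimensional $A$-modules. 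The upshot is that almost every indecomposable $A$-module of dimension $d$ is a direct summand of a member of finitely many such families.

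The main obstacle is the final passage from ``summand of finitely many families'' to ``finitely many families.'' This is a generic-decomposition argument: along each pulled-back family the indecomposable decomposition type of the fibre is constant on a dense open subset of the parameter line, so its indecomposable summands trace out only finitely many new one-parameter families, while the finitely many exceptional parameters contribute finitely many extra modules that are absorbed into the discrete part. Making this rigorous requires controlling the finite object-windows uniformly in $\lambda$, so that the restricted families remain $k[t]$-free of finite rank, and verifying that summand-closure does not generate infinitely many families; it is precisely here that the balanced (cleaving) hypothesis, via the functorial splitting of Theorem \ref{teorema 1}, and the local boundedness of $A$ are indispensable.
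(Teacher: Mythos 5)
Your forward direction in (a) (if $B$ is locally representation-finite then so is $A$) matches the paper: $F$ is balanced by Proposition \ref{balanced}, hence cleaving by Corollary \ref{cleaving}, and Krull--Schmidt applied to $X \mid F_{\bulp}F_\lambda X$ finishes. But your reverse direction has a genuine gap. The claim that every indecomposable $B$-module $N$ is a direct summand of $F_\lambda \tilde M$ for some indecomposable $A$-module $\tilde M$ is unjustified: for a general (non-Galois) covering there is no reason that $N \mid F_\lambda F_{\bulp}N$ (the paper proves this only in the schurian, finite-order case, Proposition \ref{subsec1.7}), and the ``fundamental-group action on the fibers'' you invoke simply does not exist here --- a covering induced from a map of quivers with relations need not be Galois, which is the whole point of the theorem. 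Moreover, the cleaving transfer of \cite{1} only passes representation-finiteness from the \emph{target} to the \emph{source}, i.e.\ in the direction you already did; it says nothing in the direction you need. The paper's actual idea is a factorization through the universal cover: by Proposition \ref{universal}, the universal Galois covering $\tilde F \colon \tilde B \to B$ of \cite{5} factors as $\tilde F = F F'$ with $F' \colon \tilde B \to A$ again a covering induced from a map of quivers with relations. Applying the already-proved direction to $F'$ shows $\tilde B$ is locally representation-finite, and then the theorem of \cite{9} (if the universal cover of $B$ is locally representation-finite, then $B$ is representation-finite) concludes. This factorization is the missing ingredient; without it, or some substitute, your argument for this direction does not go through.

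In (b), your reduction is exactly the paper's: $\dim_k F_\lambda X = \dim_k X$, so by Theorem \ref{teorema 1} every indecomposable $X$ with $X(a)\ne 0$, $\dim_k X = d$ is a direct summand of $F_{\bulp}M_i \otimes_{k[t]} k[t]/(t-\lambda)$ for finitely many $A$--$k[t]$-bimodules $F_{\bulp}M_i$. But the ``main obstacle'' you then identify --- that these bimodules are only finitely generated (not free) over $k[t]$ and catch indecomposables only up to direct summands --- is left as an admittedly non-rigorous sketch, so your proof is incomplete precisely at its last step. The paper closes this step with a single citation: by \cite{6}, the weakened conditions (a$'$) (finitely generated over $k[t]$) and (b$'$) (direct summand of a fibre) already imply tameness; this is set up explicitly in Section \ref{subsec4.3} before the theorem. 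Note also that your remark that the balanced/cleaving hypothesis is ``indispensable'' for this final passage is misplaced: balancedness is used only once, to get the splitting $X \mid F_{\bulp}F_\lambda X$; the implication (a$'$)${}+{}$(b$'$) $\Rightarrow$ tame is a general fact about the definition of tameness and has nothing to do with the covering functor.
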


More precise statements are shown in Section \ref{sec4}. For a discussion on the representation type of algebras we refer to \cite{1,6,7,13,15}. 

\section{Coverings: examples and basic properties}\label{sec1} 

\subsection{The pull-up and push-down functors} \label{subsec1.1} Following \cite{2,4}, consider a locally bounded $k$-category $A$, that is, $A$ has a (possibly infinite) set of non-isomorphic objects $A_0$ such that
\begin{itemize}
\item[(a)] $A(a,b)$ is a $k$-vector space and the composition corresponds to linear
maps $A(a,b)\otimes_k A(b,c)\to A(a,c)$ for every $a,b,c$ objects in
$A_0$;
\item[(b)] $A(a,a)$ is a local ring for every $a$ in $A_0$;
\item[(c)] $\sum\limits_bA(a,b)$ and $\sum\limits_bA(b,a)$ are finite
dimensional for every $a$ in $A_0$.
\end{itemize}

For a locally bounded $k$-category $A$, we denote by ${_A\Mod}$
(resp. $\Mod_A$) the category of covariant (resp. contravariant) functors $A \to \Mod_k$; by 
${_A\mod}$ (resp. $\mod_A$) we denote the full subcategory of locally finite-dimensional functors $A \to \mod_k$ of the category ${_A\Mod}$ (resp. $\Mod_A$). In case $A_0$ is finite, $A$ can be identified with the finite-dimensional $k$-algebra $\oplus_{a,b \in A_0} A(a,b)$; in this case the category ${_A\Mod}$ (resp. ${_A\mod}$) is equivalent to the category of left $A$-modules (resp. finitely generated left $A$-modules). 

According to \cite{13}, in case $k$ is algebraically closed, there exist a quiver $Q$ and an ideal $I$ of the path category $kQ$, such that $A$ is equivalent to the quotient $kQ/I$. Then any module $M \in {_A\Mod}$ can be identified with a {\em representation} of the {\em quiver with relations} $(Q,I)$. Usually our examples will be presented by means of quivers with relations. 

Let $F\colon A\to B$ be a $k$-linear functor between two locally
bounded $k$-categories. The {\em pull-up\/} functor $F_{\bulp}
\colon {_B\Mod} \to {_A\Mod}$, $M\mapsto M F$ admits a left
adjoint $F_\lambda \colon {_A\Mod} \to\break {_B\Mod}$, called the
{\em push-down\/} functor, which is uniquely defined (up to
isomorphism) by the following requirements:
\begin{itemize}
\item[(i)] $F_\lambda A(a,-)=B(Fa,-)$;
\item[(ii)] $F_\lambda$ commutes with direct limits. 
\end{itemize}
In particular, $F_\lambda$ preserves projective modules. 
Denote by $F_\rho \colon {_A\Mod} \to {_B\Mod}$ the right adjoint to $F_{\bulp}$. 

For covering functors $F\colon A\to B$ we get an explicit description
of $F_\lambda$ and $F_\rho$ as follows:

\begin{lemma} {\rm \cite{2}}.
Let $F\colon A\to B$ be a covering functor. Then
\begin{itemize}
\item[{\rm (a)}] For any $X\in  {_A\mod}$ and $f\in B(i,j)$,
$$F_\lambda X(f)=(X({_bf^\bulp_a}))\colon \bigoplus\limits_{Fa=i}X(a)
\to \bigoplus\limits_{Fb=j}X(b),\hbox{ with
}\sum_{Fb=j}F({_bf^\bulp_a})=f.$$
In particular, $F_\bulp (a,-)\colon F_\lambda A(a,-)\to B(Fa,-)$
is the natural isomorphism given by $({_bf^\bulp_a})_b\mapsto f$.
\item[{\rm (b)}] For any $X\in {_A\mod}$ and $f\in B(i,j)$
$$F_\rho X(f)=(X({^\bulp_bf_a}))\colon \prod\limits_{Fa=i}X(a)\to
\prod\limits_{Fb=j} X(b),\hbox{ with }\sum_{Fa=i}F({^\bulp_bf_a})=f.$$
In particular, $F_\bulp D(-,b)\colon F_\rho DA(-,b)\to DB(-,Fb)$ is
the natural isomorphism induced by $({^\bulp_bf_a})_a\mapsto
f$. \cuadro
\end{itemize}
\end{lemma}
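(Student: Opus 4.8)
The plan is to prove the explicit formula for the push-down functor $F_\lambda$ first and then obtain $F_\rho$ by a dual argument. I would start by defining a candidate functor $G\colon {_A\mod}\to{_B\mod}$ through the stated data: on objects $GX(j)=\bigoplus_{Fb=j}X(b)$, and for a morphism $f\in B(i,j)$ the block of $GX(f)$ from the summand indexed by $a$ (with $Fa=i$) to the one indexed by $b$ (with $Fb=j$) is $X({_bf^\bulp_a})$, where $({_bf^\bulp_a})_b$ is the unique preimage of $f$ under the covering bijection $\bigoplus_{Fb=j}A(a,b)\to B(i,j)$. The first thing to check is that $GX$ is actually a $B$-module, that is, $GX(\id_i)=\id$ and $GX(gf)=GX(g)\,GX(f)$ for composable $f\in B(i,j)$ and $g\in B(j,l)$; functoriality of $G$ in $X$ is then immediate, since the blocks $X({_bf^\bulp_a})$ are natural in $X$.

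The heart of the matter is the compatibility of the decompositions under composition: for fixed $a$ with $Fa=i$ and $c$ with $Fc=l$ one has ${_c(gf)^\bulp_a}=\sum_{Fb=j}{_cg^\bulp_b}\,{_bf^\bulp_a}$ in $A(a,c)$. I would prove this by applying the $k$-linear functor $F$ to the right-hand family indexed by $c$ and using the defining relations $\sum_{Fb=j}F({_bf^\bulp_a})=f$ and $\sum_{Fc=l}F({_cg^\bulp_b})=g$ to compute that its image is $gf$; since the left-hand family also maps to $gf$ by definition, the injectivity of the covering bijection $\bigoplus_{Fc=l}A(a,c)\to B(i,l)$ forces the two families to coincide componentwise. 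Applying $X$ and reading the identity as matrix multiplication gives exactly $GX(g)\,GX(f)=GX(gf)$, while the unit axiom is clear because $\id_i$ decomposes as the family of identities and zeros. I expect this cocycle-type verification to be the main obstacle, as it is the only place where the covering hypothesis is genuinely used.

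Once $G$ is known to be a functor ${_A\mod}\to{_B\mod}$, I would identify it with $F_\lambda$ via the uniqueness clause in the definition of the push-down. Evaluating at a representable $X=A(a,-)$ gives $GA(a,-)(j)=\bigoplus_{Fb=j}A(a,b)=B(Fa,j)$ by the covering condition, and tracing the bijection $({_bf^\bulp_a})_b\mapsto f$ shows that this identification is natural and respects the module structure, so it is precisely the asserted isomorphism $F_\lambda A(a,-)\to B(Fa,-)$; this establishes property (i) together with the ``in particular'' statement. Since colimits in ${_B\mod}$ are formed pointwise and each $GX(j)=\bigoplus_{Fb=j}X(b)$ is a direct sum, $G$ commutes with direct limits, which is property (ii). By uniqueness of the functor satisfying (i) and (ii) we obtain $G\cong F_\lambda$, proving (a).

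For (b) I would run the dual argument. The right adjoint $F_\rho$ is characterized dually, by $F_\rho DA(-,b)=DB(-,Fb)$ together with commutation with inverse limits, and the candidate $H$ is defined by $HX(j)=\prod_{Fb=j}X(b)$ with block $X({^\bulp_bf_a})$, using the second covering bijection $\bigoplus_{Fa'=i}A(a',b)\to B(i,Fb)$ and its decomposition $({^\bulp_bf_a})_a$ of $f$. The functoriality of $HX$ rests on the dual cocycle identity ${^\bulp_c(gf)_a}=\sum_{Fb=j}{^\bulp_cg_b}\,{^\bulp_bf_a}$, proved by the same application of $F$ and injectivity, now using the bijection with target fiber over $c$; and the natural isomorphism $F_\rho DA(-,b)\to DB(-,Fb)$ is read off from $({^\bulp_bf_a})_a\mapsto f$ exactly as in (a). Uniqueness of the right adjoint then yields $H\cong F_\rho$, completing the proof. \cuadro
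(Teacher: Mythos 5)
The paper offers no proof of this lemma at all: it is quoted from \cite{2} with only a tombstone, so there is no internal argument to compare against. Judged on its own, your proof is correct and is the standard one. The step you rightly identify as the heart of the matter, the compatibility
$${_c(gf)^\bulp_a}=\sum_{Fb=j}{_cg^\bulp_b}\,{_bf^\bulp_a},$$
is proved exactly the right way: apply the $k$-linear functor $F$ to the family on the right, use $\sum_{Fb=j}F({_bf^\bulp_a})=f$ and $\sum_{Fc=l}F({_cg^\bulp_b})=g$ to see that its image is $gf$, and invoke injectivity of $\bigoplus_{Fc=l}A(a,c)\to B(i,l)$; the same computation yields the unit axiom, the $B$-linearity of $({_bf^\bulp_a})_b\mapsto f$, and its naturality in $a$. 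This is also the style of argument the paper itself uses later (in the proofs of Proposition \ref{balanced} and of the Theorem in Section \ref{subsec3.1}).

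Two points should be tightened. First, you define the candidate $G$ only on ${_A\mod}$, but the paper's uniqueness clause (requirements (i) and (ii)) characterizes $F_\lambda$ as a functor on ${_A\Mod}$, and ${_A\mod}$ is not closed under direct limits, so the Kan-extension uniqueness argument cannot be run inside ${_A\mod}$; since your formula makes sense verbatim for arbitrary modules, define $G$ on ${_A\Mod}$, conclude $G\cong F_\lambda$ there, and then restrict. Second, in part (b) you invoke a dual characterization of $F_\rho$ (values $DB(-,Fb)$ on the modules $DA(-,b)$, plus commutation with inverse limits) that the paper never states; it is true but needs a line of justification: $F_\rho$ preserves limits as a right adjoint, $F_\rho DA(-,b)\cong DB(-,Fb)$ follows from the adjunction together with $\Hom_A(Y,DA(-,b))\cong D(Y(b))$, and uniqueness holds because every module admits an injective copresentation by products of modules of the form $DA(-,b)$. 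A more economical alternative is to deduce (b) from (a) applied to the opposite covering $F^{op}\colon A^{op}\to B^{op}$, which interchanges the two covering bijections, and to use that the duality $D$ on locally finite-dimensional modules carries $(F^{op})_\lambda$ to $F_\rho$.
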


\subsection{The order of a covering} The following lemma allows us to introduce the notion of {\em order} of a covering.

\begin{lemma}\label{lema1.3}
Let $F\colon A\to B$ be a covering functor. Assume that $B$ is connected and a fiber $F^{-1}(i)$ is finite, for some $i\in B_0$. Then the fibers have constant cardinality.
\end{lemma}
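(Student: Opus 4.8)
The plan is to reduce the global statement to a purely local one propagated along the connectedness of $B$, and to control the cardinalities of the fibers by a double-counting argument on dimensions of morphism spaces. The whole proof rests on extracting the numerical content of condition (b) in the definition of a covering functor.

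First I would fix two objects $i\neq j$ of $B$ and record what (b) says numerically. Fixing $a\in F^{-1}(i)$, the first bijection $\bigoplus_{Fb'=j}A(a,b')\to B(i,j)$ gives $\sum_{b'\in F^{-1}(j)}\dim_k A(a,b')=\dim_k B(i,j)$, a value independent of the chosen $a$; fixing $b\in F^{-1}(j)$, the second bijection $\bigoplus_{Fa'=i}A(a',b)\to B(i,j)$ gives $\sum_{a'\in F^{-1}(i)}\dim_k A(a',b)=\dim_k B(i,j)$, independent of $b$. Setting $M_{a',b'}=\dim_k A(a',b')$ for $a'\in F^{-1}(i)$ and $b'\in F^{-1}(j)$, and $d=\dim_k B(i,j)$, this says precisely that every row and every column of the (a priori possibly infinite) matrix $M$ has the same sum $d$; note $d$ is finite since $B$ is locally bounded.

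Next I would prove the local step: if $B(i,j)\neq 0$ (so $d>0$) and $F^{-1}(i)$ is finite, say of cardinality $n$, then $F^{-1}(j)$ is finite of cardinality $n$. Indeed the total sum of the entries of $M$, computed by rows, is $nd<\infty$; since every column sums to $d>0$ each column is nonzero, so there are at most $nd/d=n$ columns and $F^{-1}(j)$ is finite. Computing the same total by columns then yields $\lvert F^{-1}(j)\rvert\,d=nd$, hence $\lvert F^{-1}(j)\rvert=n$. The symmetric bookkeeping applied to $B(j,i)$ disposes of the case $B(j,i)\neq 0$. Thus any two objects joined by a nonzero morphism in either direction have fibers of equal, finite cardinality, once one of them is known to be finite. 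I expect this passage from the equal row/column sums to the finiteness and equality of $F^{-1}(j)$ to be the only delicate point, and it is exactly where the hypothesis $d>0$, i.e. $B(i,j)\neq 0$, is indispensable.

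Finally I would invoke connectedness of $B$, which means precisely that any two objects are joined by a chain $i=i_0,i_1,\dots,i_m=j$ with $B(i_l,i_{l+1})\neq 0$ or $B(i_{l+1},i_l)\neq 0$ at each step. Starting from the object $i$ whose fiber is assumed finite and applying the local step along such a chain shows that every fiber is finite of cardinality $\lvert F^{-1}(i)\rvert$. Surjectivity of $F$ on objects guarantees that each fiber is nonempty, so this common value is a well-defined positive integer, and the fibers have constant cardinality.
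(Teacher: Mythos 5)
Your proof is correct and takes essentially the same approach as the paper's: both extract from the covering bijections that every row and column of the matrix $\bigl(\dim_k A(a',b')\bigr)$ sums to $\dim_k B(i,j)$, double-count the total for a pair with $B(i,j)\neq 0$, and propagate equality of fiber cardinalities along a chain using connectedness. Your version is slightly more careful than the paper's terse computation in that you first establish finiteness of $F^{-1}(j)$ before equating the two counts, but this is a refinement of the same argument, not a different route.
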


\begin{proof}
Let $i\!\in\!B_0$ and $0\!\ne\!f\!\in\!B(i,j)$. For $a\!\in\!F^{-1}(i)$,
$\sum\limits_{Fb=j}\dim_kA(a,b)=\dim_kB(i,j)$. Hence
$|F^{-1}(i)|\dim_kB(i,j)\!=\!$
$\sum\limits_{Fa=i}\sum\limits_{Fb=j}\!\dim_kA(a,b)\!=\!$
$\sum\limits_{Fb=j}\sum\limits_{Fa=i}\!\dim_kA(a,b)\!=\!$ \break
$|F^{-1}(j)|\dim_kB(i,j)$ and $|F^{-1}(i)|=|F^{-1}(j)|$. Since $B$ is connected, the claim follows. \cuadro
\end{proof}

In case $F\colon A\to B$ is a covering functor with $B$ connected and $A_0$ is finite, we define the {\em order\/} of $F$ as $\ord\,(F)=|F^{-1}(i)|$ for any
$i\in B_0$. Thus $\ord\,(F)|B_0|=|A_0|$.

We recall from the Introduction that a covering functor $F\colon A\to B$ is {\em
balanced\/} if ${_bf^\bulp_a}={^\bulp_bf_a}$ for every couple of objects
$a,b$ in $A$.

\begin{lemma} \label{igual}
Let $F\colon A\to B$ be a balanced covering functor, then $F_\lambda
=F_\rho$ as functors ${_A\mod} \to {_B\mod}$. 
\cuadro
\end{lemma}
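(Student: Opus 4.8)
The plan is to read $F_\lambda$ and $F_\rho$ off the explicit formulas in the preceding Lemma of \cite{2} and to observe that the balanced hypothesis makes them literally coincide, both on objects and on morphisms, so that essentially no computation is required. Fix $X\in {_A\mod}$. On an object $i$ of $B$ one has $F_\lambda X(i)=\bigoplus_{Fa=i}X(a)$ and $F_\rho X(i)=\prod_{Fa=i}X(a)$. Since by hypothesis both $F_\lambda X$ and $F_\rho X$ lie in ${_B\mod}$, these spaces are finite dimensional, so for each $i$ only finitely many $a\in F^{-1}(i)$ contribute a nonzero $X(a)$; hence the finite direct sum and the finite direct product agree, and $F_\lambda X$ and $F_\rho X$ carry the same underlying space at every object of $B$.

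Next I would compare the action on a morphism $f\in B(i,j)$. Part (a) of the cited Lemma describes $F_\lambda X(f)$ as the block matrix whose $(b,a)$-entry is $X({_bf^\bulp_a})$, while part (b) describes $F_\rho X(f)$ as the block matrix whose $(b,a)$-entry is $X({^\bulp_bf_a})$. The definition of a balanced covering is exactly the identity ${_bf^\bulp_a}={^\bulp_bf_a}$ in $A(a,b)$ for all objects $a,b$; applying $X$ to it yields equal entries, so $F_\lambda X(f)=F_\rho X(f)$. Thus $F_\lambda X=F_\rho X$ as representations of $B$.

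Finally I would verify compatibility with morphisms of ${_A\mod}$. A morphism $g\colon X\to Y$ is a family of maps $g_a\colon X(a)\to Y(a)$, and at each object $i$ both $F_\lambda g$ and $F_\rho g$ are the map induced coordinatewise by the $g_a$ with $a\in F^{-1}(i)$; under the identification $\bigoplus_{Fa=i}X(a)=\prod_{Fa=i}X(a)$ established in the first step these agree. Hence $F_\lambda=F_\rho$ as functors ${_A\mod}\to {_B\mod}$.

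The argument is almost entirely formal once the two formulas are in hand; the single point deserving a word of care is the identification of the coproduct with the product on each fiber. This is precisely where the restriction to ${_A\mod}$ enters, since it forces the fibers to have finite support against $X$, whereas for infinite fibers the two adjoints would genuinely differ. I expect this finiteness observation to be the only non-automatic step.
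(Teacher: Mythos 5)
Your proof is correct and is essentially the paper's own argument: the paper states this lemma with only the QED box, treating it as immediate from the explicit block-matrix formulas for $F_\lambda$ and $F_\rho$ given in the lemma quoted from \cite{2}, combined with the defining identity ${_bf^\bulp_a}={^\bulp_bf_a}$ of a balanced covering --- which is exactly what you spell out. Your closing observation, that the only non-formal point is identifying the finite direct sum with the finite product on each fiber (which is where the restriction to ${_A\mod}$ enters), is a sensible detail the paper leaves implicit.
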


\subsection{\it Examples}\label{subsec1.5} 

(a) Let $A$ be a locally bounded $k$-category and let $G$ be a group
of $k$-linear automorphisms acting freely on $A$ (that is, for $a\in
A_0$ and $g\in G$ if $ga=a$, then $g=1$). The quotient category $A/G$
has as objects the $G$-orbits in the objects of $A$; a
morphism $f\colon i\to j$ in $A/G$ is a family $f\colon ({_bf_a})\in
\prod_{a,b} A(a,b)$, where $a$ (resp. $b$) ranges in $i$ (resp. $j$) and
$g\cdot {_bf_a}={_{gb}f_{ga}}$ for all $g\in G$. The canonical projection
$F\colon A\to A/G$ is called a {\em Galois covering\/} defined by the
action of $G$.

A particular situation is illustrated by the following algebras (given as quivers with relations):

$$A\colon \,\raise36pt\hbox{$\xymatrix{
\bulito\ar[rr]^{\rho_0}\ar[ddrr]_(.25){\gamma_0}
 &&\bulito\ar[rr]^{\rho_1}\ar[ddrr]_(.25){\gamma_1}
 &&\bulito\\
&& &&\\
\bulito\ar[rr]_{\sigma_0}\ar[uurr]^(.25){\nu_0} \hole
 &&\bulito\ar[rr]_{\sigma_1}\ar[uurr]^(.25){\nu_1} \hole
 &&\bulito}$}\qquad\qquad B\colon \,
\xymatrix{
\bulito\ar@/^1pc/[r]^{\alpha_0}
\ar@/_1pc/[r]_{\beta_0} &\bulito\ar@/^1pc/[r]^{\alpha_1}
\ar@/_1pc/[r]_{\beta_1} &\bulito
}$$
$$\left\{\begin{array}{c}
\rho_1\rho_0 =\nu_1\gamma_0\\
\sigma_1\sigma_0=\gamma_1\nu_0\\
\rho_1\nu_0=\nu_1\sigma_0\\
\sigma_1\gamma_0=\gamma_1\rho_0
\end{array}\right.\qquad\qquad\qquad\qquad\qquad\qquad
\left\{\begin{array}{c}
\alpha_1\alpha_0=\beta_1\beta_0\\
\beta_1\alpha_0=\alpha_1\beta_0
\end{array}\right.$$

\bigskip

The algebra $A$ is tame, but $B$ is wild when char $k=2$ \cite{7}. The cyclic group $C_2$ acts freely on $A$ and $A/C_2$ is isomorphic to $B$.

(b) Consider the algebras given by quivers with relations and the
functor $F$ as follows:
$$\xymatrix{
a_2\ar@/^1pc/[r]^{\alpha_2}\ar@/_1pc/[r]_{\beta_2} 
 &b_2\ar@/_1pc/[r]_{\rho_2} 
 &b_1\ar@/_1pc/[l]_{\rho_1}
 &a_1\ar@/_1pc/[l]_{\alpha_1}\ar@/^1pc/[l]^{\beta_1} 
}\ \aderecha{F}\ \xymatrix{
a\ar@/^1pc/[r]^{\alpha}
\ar@/_1pc/[r]_{\beta} &b\ar@(ur,dr)[]^\rho 
}$$
both algebras with rad $^2 =0$ and $F\alpha_1=\alpha$, $F\alpha_2=\alpha + \beta$, $F\beta_i=\beta$, $F\rho_i=\rho$, $i=1,2$. It is a simple exercise to check that $F$ is a balanced covering, but obviously it is not of Galois type.

(c) Consider the functor
$$A\colon \raise40pt\hbox{$\xymatrix{
&b_1 &\\
a_2\ar[ur]^{\beta_2}\ar[dr]_{\alpha_2} 
 &&a_1\ar[ul]_{\alpha_1}\ar[dl]^{\beta_1}\\
&b_2 &
}$}\aderecha{F}\ B \colon \xymatrix{
a\ar@/^1pc/[r]^{\alpha}
\ar@/_1pc/[r]_{\beta} &b }$$
where $F\alpha_i=\alpha$, $i=1,2$, $F\beta_1=\beta$, $F\beta_2=\alpha
+\beta$. Since $F(\beta_2-\alpha_2)=\beta$ and
$F(\beta_1)=\beta$, then ${_{b_2}\beta^{\bulp}_{a_2}}=-\alpha_2$ and
${^{\bulp}_{b_2}\beta_{a_2}}=0$. Hence $F$ is a non-balanced covering
functor.

For the two dimensional indecomposable $A$-module $X$ given by $X(a_2)=k, X(b_2)=k, X(\alpha_2)=\id$ and zero otherwise, it follows that $F_{\bulp}F_\lambda X$ is indecomposable and hence $X$
is not a direct summand of $F_{\bulp}F_\lambda X$.

(d) As a further example, consider the infinite category $A$ and the balanced covering functor defined in the obvious way:
$$A\colon \xymatrix{
\cdots &\bulito\ar@/_1pc/[r]_{\beta'_2}
 &\bulito\ar@/_1pc/[l]_{\beta_2}\ar@/_1pc/[r]_{\alpha'_1}
 &\bulito\ar@/_1pc/[l]_{\alpha_1}\ar@/_1pc/[r]_{\beta'_1}
 &\bulito\ar@/_1pc/[l]_{\beta_1}\ar@(ur,dr)[]^{\alpha_0}
}\ \aderecha{F}\ \xymatrix{
\bulito\ar@(ul,dl)[]_\beta\ar@(ur,dr)[]^\alpha
}$$
where both categories $A$ and $B$ have $\rad^2=0$.

\subsection{Coverings of schurian categories}\label{subsec1.6} We say that a locally bounded $k$-category $B$ is {\em schurian\/} if
for every $i,j\in B_0$, $\dim_kB(i,j)\le 1$.

\begin{lemma}
Let $F\colon A\to B$ be a covering functor and assume that $B$ is
schurian, then $F$ is balanced.
\end{lemma}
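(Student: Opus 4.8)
The plan is to fix objects $a,b$ in $A$ and a morphism $f\in B(Fa,Fb)$, and to prove that the two elements ${_b f^\bulp_a}$ and ${^\bulp_b f_a}$, both of which live in $A(a,b)$, coincide. The whole argument rests on the observation that the schurian hypothesis forces each of the two covering bijections to collapse onto a single summand. I would begin with the trivial case: if $A(a,b)=0$, then ${_b f^\bulp_a}$ and ${^\bulp_b f_a}$ are both elements of the zero space $A(a,b)$, hence equal. So I may assume $A(a,b)\neq 0$.

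Next I would exploit the covering property. Since $A(a,b)$ is a nonzero direct summand of $\bigoplus_{Fb'=Fb}A(a,b')\cong B(Fa,Fb)$, the space $B(Fa,Fb)$ is nonzero; together with the schurian bound $\dim_k B(Fa,Fb)\le 1$ this yields $\dim_k B(Fa,Fb)=1$. Hence the direct sum $\bigoplus_{Fb'=Fb}A(a,b')$ is one-dimensional, so $A(a,b)$ is one-dimensional and every other summand $A(a,b')$ with $Fb'=Fb$ and $b'\neq b$ vanishes. Running the identical argument with the second covering bijection $\bigoplus_{Fa'=Fa}A(a',b)\cong B(Fa,Fb)$ shows likewise that $A(a',b)=0$ for every $a'\neq a$ in the fibre of $Fa$.

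Finally I would conclude by a uniqueness argument. Because all summands other than $A(a,b)$ vanish, the defining relation $\sum_{Fb'=Fb}F({_{b'}f^\bulp_a})=f$ collapses to $F({_b f^\bulp_a})=f$, and symmetrically $\sum_{Fa'=Fa}F({^\bulp_b f_{a'}})=f$ collapses to $F({^\bulp_b f_a})=f$. Since the covering bijection restricts to an injective $k$-linear map on the single surviving summand $A(a,b)$, there is at most one element of $A(a,b)$ mapping to $f$ under $F$; therefore ${_b f^\bulp_a}={^\bulp_b f_a}$, which is exactly the balanced condition.

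I expect the only delicate point to be the bookkeeping of which summand survives in each of the two decompositions, namely verifying that the surviving index is precisely $b$ (resp. $a$) because it is $A(a,b)$ that is assumed nonzero, rather than any substantive difficulty. Once the schurian condition has been used to make both direct sums one-dimensional, the equality is immediate from the injectivity of $F$ on the surviving summand.
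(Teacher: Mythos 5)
Your proof is correct and is essentially the paper's own argument: both rest on the single observation that the schurian bound forces each covering decomposition $\bigoplus_{Fb'=Fb}A(a,b')\cong B(Fa,Fb)$ (and its dual) to collapse onto one surviving summand, after which uniqueness of the preimage of $f$ in that summand gives ${_bf^{\bulp}_a}={^{\bulp}_bf_a}$. The only cosmetic difference is that you split into cases according to whether $A(a,b)=0$, while the paper splits according to where the nonzero component of the lift of $f\neq 0$ sits; these case distinctions are equivalent.
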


\begin{proof}
Let $0\ne f\in B(i,j)$ and $Fa=i$, $Fb=j$. Since $B$ is schurian,
there is a unique $0\ne {_{b'}f^{\bulp}_a} \in A(a,b')$ with $Fb'=j$
and a unique $0\ne {^{\bulp}_bf_{a'}} \in A(a',b)$ with $Fa'=i$
satisfying $F{_{b'}f^{\bulp}_a}=f=F{^{\bulp}_bf_{a'}}$. In case
$b=b'$, then $a=a'$ and ${_bf^{\bulp}_a}={^{\bulp}_bf_a}$. Else $b\ne
b'$ and ${_bf^{\bulp}_a}=0$. In this situation $a \ne a'$ and
${^{\bulp}_bf_a}=0$. \cuadro
\end{proof}

\begin{proposition}\label{subsec1.7}  
Let $F\colon A\to B$ be a covering functor with finite order and
$B$ schurian. Then for every $M\in {_B\mod}$, $F_\lambda
F_{\bulp}M\cong M^{\ordp\,(F)}$.
\end{proposition}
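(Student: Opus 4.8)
The plan is to compute $F_\lambda F_{\bulp}M$ directly from the explicit push-down formula of \cite{2}, first on objects and then on morphisms, and finally to assemble the pieces into a natural isomorphism with $M^{\ord(F)}$. Writing $X=F_{\bulp}M$, so that $X(a)=M(Fa)$, the push-down formula gives $(F_\lambda X)(j)=\bigoplus_{Fb=j}X(b)=\bigoplus_{b\in F^{-1}(j)}M(j)$. Since $B$ is connected and $A_0$ is finite, Lemma~\ref{lema1.3} shows every fiber has cardinality $\ord(F)$, so objectwise $(F_\lambda F_{\bulp}M)(j)\cong M(j)^{\ord(F)}$. Note that this step uses neither balancedness nor the schurian condition; it holds for any finite-order covering.

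Next I would examine a morphism $f\in B(i,j)$, and here the schurian hypothesis is decisive. Since $\dim_k B(i,j)\le 1$, the covering bijection $\bigoplus_{Fb'=j}A(a,b')\cong B(i,j)$ forces, for each $a\in F^{-1}(i)$ and each nonzero $f$, a single $b=\phi_f(a)\in F^{-1}(j)$ with ${}_bf^{\bulp}_a\ne 0$ and $F({}_bf^{\bulp}_a)=f$, all other ${}_{b'}f^{\bulp}_a$ vanishing. Counting dimensions over both fibers exactly as in the proof of Lemma~\ref{lema1.3} shows that $\phi_f$ is a bijection $F^{-1}(i)\to F^{-1}(j)$. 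By the push-down formula the $(b,a)$-block of $(F_\lambda F_{\bulp}M)(f)$ is $X({}_bf^{\bulp}_a)=M(F({}_bf^{\bulp}_a))$, which equals $M(f)$ when $b=\phi_f(a)$ and is $0$ otherwise. Thus $(F_\lambda F_{\bulp}M)(f)$ is the block matrix carrying the $a$-summand to the $\phi_f(a)$-summand via $M(f)$.

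It remains to fold these block-permutation maps into the genuinely diagonal maps of $M^{\ord(F)}$, that is, to choose bijections $F^{-1}(j)\cong\{1,\dots,\ord(F)\}$ compatible with all the $\phi_f$. This consistency is where I expect the main difficulty to lie: it is an issue of upgrading the objectwise isomorphism to a natural isomorphism of $B$-modules, and naively it looks like a triviality-of-holonomy condition for the lift-permutations around cycles in $B$. The tool to control it is that $F$ is a functor whose covering bijections are injective on each hom space, so a lift of a composite is the composite of the chosen lifts; hence $\phi_{\id}=\id$ and $\phi_{gf}=\phi_g\phi_f$ whenever $gf\ne0$. One then only needs compatibility along morphisms on which $M$ acts nonzero, and connectedness of $B$ together with constant fiber size propagates a labeling.

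The cleanest way to organize this naturality, sidestepping a walk-by-walk holonomy bookkeeping, is to prove the statement on projectives and extend by right exactness. Indeed the covering bijection gives $F_{\bulp}P_i^B=F_{\bulp}B(i,-)\cong\bigoplus_{a\in F^{-1}(i)}A(a,-)=\bigoplus_{a\in F^{-1}(i)}P_a^A$, and property~(i) of the push-down gives $F_\lambda P_a^A=B(Fa,-)=P_{Fa}^B$; since $Fa=i$ for $a\in F^{-1}(i)$, this yields $F_\lambda F_{\bulp}P_i^B\cong (P_i^B)^{\ord(F)}$. Both $F_\lambda F_{\bulp}(-)$ and $(-)^{\ord(F)}$ are additive and right exact (the former because $F_{\bulp}$ is exact and $F_\lambda$, being a left adjoint, is right exact), so once the comparison is arranged as a natural transformation that is an isomorphism on projectives, a projective presentation $P_1\to P_0\to M\to 0$ transports the isomorphism to every $M\in{}_B\mod$. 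Exhibiting that natural transformation is precisely the obstacle flagged above, and it is exactly the schurian rigidity—hom spaces of dimension at most one between indecomposable projectives—that makes the fiberwise identifications cohere.
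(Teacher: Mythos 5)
Your computation of $F_\lambda F_{\bulp}M$ on objects and morphisms is correct and coincides exactly with the paper's own argument: schurianness forces, for each $a$ in the fiber over $i$ and each $0\ne f\in B(i,j)$, a unique $b=\phi_f(a)$ with ${_bf^{\bulp}_a}\ne 0$, and then $F_\lambda F_{\bulp}M(f)$ carries the $a$-summand to the $\phi_f(a)$-summand via $M(f)$. You are also right that the real content is the coherence of the identifications across objects (the paper's proof is silent on this point and simply asserts that its square commutes). The gap is that neither of your proposed closures works, and the second one provably cannot work. Consider $B=kQ/I$ with $Q$ having vertices $1,2,3$, arrows $\alpha\colon 1\to 2$, $\beta\colon 2\to 3$, $\gamma\colon 1\to 3$, and $I=(\beta\alpha)$; this $B$ is connected and schurian. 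Let $A$ have vertices $1_s,2_s,3_s$ ($s=1,2$), arrows $\alpha_s\colon 1_s\to 2_s$, $\beta_s\colon 2_s\to 3_s$, $\gamma_1\colon 1_1\to 3_2$, $\gamma_2\colon 1_2\to 3_1$, relations $\beta_s\alpha_s=0$, and let $F\colon A\to B$ be the obvious functor: one checks it is a covering of order $2$ (indeed a Galois covering for the sheet-swapping automorphism). Here $\phi_\alpha=\phi_\beta=\id$ while $\phi_\gamma$ is the transposition, so no labeling of the fibers is compatible with all three arrows; and for the module $M$ with $M(1)=k$, $M(2)=k^2$, $M(3)=k$, $M(\alpha)$ the inclusion into the first coordinate, $M(\beta)$ the projection onto the second coordinate, and $M(\gamma)=\id$, all three arrows act nonzero. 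This refutes your claim that compatibility along $M$-nonzero morphisms can be propagated by connectedness: an isomorphism $F_\lambda F_{\bulp}M\cong M^2$ does exist, but it necessarily mixes the interior of $M(2)$ (swap the two copies of $\mathrm{im}\,M(\alpha)$ and fix a complement, which is legitimate only because $\mathrm{im}\,M(\alpha)\subseteq\ker M(\beta)$, i.e.\ because of the zero relation); it is not induced by any relabeling of copies.

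Decisively for your ``cleanest way'': in this same example there is \emph{no} natural transformation $F_\lambda F_{\bulp}\to(-)^2$ that is an isomorphism on projectives, so the transport along projective presentations cannot even be set up. Indeed, since $\Hom(B(i,-),B(i,-))\cong B(i,i)=k$, any isomorphism $F_\lambda F_{\bulp}B(i,-)\to B(i,-)^2$ equals one fixed such isomorphism composed with a constant matrix $U_i\in GL_2(k)$; naturality with respect to the module maps $B(2,-)\to B(1,-)$, $B(3,-)\to B(2,-)$, $B(3,-)\to B(1,-)$ given by right composition with $\alpha,\beta,\gamma$ forces $U_1=U_2$, $U_2=U_3$ and $U_1S=U_3$, where $S$ is the transposition coming from $\phi_\gamma$ — a contradiction. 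So the isomorphism of the Proposition is genuinely non-natural in $M$, and any correct proof must construct it one module at a time, using the internal structure of $M$ (the zero relations that a schurian $B$ imposes force degeneracies such as $\mathrm{im}\,M(\alpha)\subseteq\ker M(\beta)$, and these are what absorb the nontrivial holonomy of the $\phi_f$). This is the formulation the paper uses — a fixed $M$ and object-wise isomorphisms — although the paper, too, leaves the construction of those isomorphisms implicit; your plan of first making the comparison natural and then specializing runs in exactly the wrong direction.
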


\begin{proof}
For any $0 \ne f\in B(i,j)$ we get
$$\xymatrix{
F_\lambda F_{\bulp}M(i)=\bigoplus_{Fa=i}M(i)
 \ar[r]^-{\sim}\ar[d]_{(M(F{_bf^{\bulp}_a}))}
 &M^{\ordp\,(F)}(i)\ar[d]^{\diagp\,(M(f),\ldots,M(f))}\\
F_\lambda F_{\bulp}M(j)=\bigoplus_{Fb=j}M(j)
 \ar[r]^-{\sim} 
 &M^{\ordp\,(F)}(j)
}$$
Since for each $a$ there is a unique $b$ with ${_bf^{\bulp}_a} \ne 0$ such that $F{_bf^{\bulp}_a}=f$, then the square commutes. \cuadro
\end{proof}

\begin{remark}
If $B$ is not schurian the result may not hold as shown in
\cite[(3.1)]{7} for a Galois covering $F\colon B\to C$ with $B$ as in
Example (\ref{subsec1.5}.a).
\end{remark}

\subsection{Coverings induced from a map of quivers}\label{subsec1.8} Let $q\colon Q'\to Q$ be a {\em covering map of quivers}, that is,
$q$ is an onto morphism of oriented graphs inducing bijections $i^+ \to q(i)^+$ and $i^- \to q(i)^-$ for every vertex $i$ in $Q'$, where $x^+$ (resp. $x^-$) denotes those arrows $x \to y$ (resp. $y \to x$). For the concept of covering
and equitable partitions in graphs, see \cite{14}. 

Assume that $Q$ is a finite quiver. Let $I$ be an {\em admissible ideal\/} of the path algebra $kQ$, that
is, $J^n\subset I\subset J^2$ for $J$ the ideal of $kQ$ generated by
the arrows of $Q$. We say that $I$ is {\em admissible with respect\/}
to $q$ if there is an ideal $I'$ of the path category $kQ'$ such that
the induced map $kq \colon kQ \to kQ'$ restricts to isomorphisms
$ \bigoplus_{q(a)=i}I'(a,b) \to I(i,j)$ for $q(b)=j$ and 
$\bigoplus_{q(b)=j}I'(a,b) \to I(i,j)$ for $q(a)=i$.  
Observe that most examples in (\ref{subsec1.5}) (not Example (c)) are built according to
the following proposition:

\begin{proposition}\label{balanced}
Let $q\colon Q'\to Q$ be a covering map of quivers, $I$ an
admissible ideal of $kQ$ and $I'$ an admissible ideal of $kQ'$ making $I$ 
admissible with respect to $q$ as in the above definition. Then the induced functor
$F\colon kQ'/I'\to kQ/I$ is a balanced covering functor.
\end{proposition}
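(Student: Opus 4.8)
The plan is to reduce every assertion about $F$ to the behaviour of $q$ on the underlying path categories, where everything is controlled by unique path lifting, and then to transport the conclusions across the quotients by $I'$ and $I$.

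First I would verify that $F$ is a covering functor. $k$-linearity is automatic since $F$ is induced by $q$, and $F$ is onto on objects because $q$ is onto on vertices. The content of condition (b) comes from the covering-map property of $q$: the bijections $i^+\to q(i)^+$ and $i^-\to q(i)^-$ say precisely that every path of $Q$ has a unique lift once a starting vertex (via out-arrows) or an ending vertex (via in-arrows) over its endpoints is fixed. On path categories this gives, for $q(a)=i$ and $q(b)=j$, isomorphisms
$$\bigoplus_{q(b')=j}(kQ')(a,b')\to (kQ)(i,j)\qquad\hbox{and}\qquad\bigoplus_{q(a')=i}(kQ')(a',b)\to (kQ)(i,j).$$
By the admissibility hypothesis these carry $\bigoplus I'$ isomorphically onto $I$, so a five-lemma (equivalently, snake-lemma) argument on the quotients $A(a,b)=(kQ')(a,b)/I'(a,b)$ and $B(i,j)=(kQ)(i,j)/I(i,j)$ produces the required bijections for $F$. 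The same fiberwise-finite isomorphisms, together with the finiteness of $B$, show that $A=kQ'/I'$ is locally bounded, so $F$ is indeed a covering functor.

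For balancedness it suffices, by the compatibility of these decompositions with the quotient maps, to prove ${_bf^\bulp_a}={^\bulp_bf_a}$ already at the level of path categories; the classes in $A(a,b)$ then coincide. Fix a path $p$ of $Q$ from $i$ to $j$ and write $L_a(p)$ for its unique lift starting at $a$ and $R_b(p)$ for its unique lift ending at $b$. The decisive observation is a two-sided consistency of lifting: if $L_a(p)$ happens to end at $b$, then it is a lift of $p$ ending at $b$, hence equals $R_b(p)$ by uniqueness of backward lifting; symmetrically, if $R_b(p)$ starts at $a$ it equals $L_a(p)$. Thus the set of $p$ whose forward lift from $a$ ends at $b$ coincides with the set of $p$ whose backward lift to $b$ starts at $a$, and on this common set $L_a(p)=R_b(p)$. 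Expanding a representative of $f\in B(i,j)$ as a combination $\sum_p c_p\,p$, the $b$-component of the preimage of $f$ under the first bijection is the class of $\sum c_p L_a(p)$ summed over $p$ with $L_a(p)$ ending at $b$, i.e. ${_bf^\bulp_a}$, while ${^\bulp_bf_a}$ is the class of $\sum c_p R_b(p)$ over $p$ with $R_b(p)$ starting at $a$; by the observation these two sums are literally equal in $(kQ')(a,b)$, hence in $A(a,b)$.

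I expect the only genuinely delicate point to be this two-sided consistency of path lifting, which is exactly where the covering-map axioms for both $i^+$ and $i^-$ are needed simultaneously; it is also what fails in Example (\ref{subsec1.5}.c). Everything else is bookkeeping: condition (b) reduces, through admissibility, to the unquotiented path-lifting isomorphisms, and balancedness is the statement that forward and backward lifting pick out the same lift whenever both land on the prescribed vertices.
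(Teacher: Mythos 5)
Your proof is correct. The covering-functor part is exactly the paper's argument: unique path lifting gives the isomorphisms $\bigoplus_{q(b')=j}kQ'(a,b')\to kQ(i,j)$ (and dually), admissibility makes them carry $\bigoplus I'$ onto $I$, and a diagram chase (your five lemma) transports them to the quotients. Where you diverge is in how balancedness is extended beyond the obvious case. The paper first checks ${_bf^{\bulp}_a}={^{\bulp}_bf_a}$ when $f$ is the class of a single arrow (the same two-sided uniqueness of lifting you isolate), and then handles arbitrary $f$ by linearity plus multiplicativity of the lifting operators inside the quotient category, via ${_c(\bar{\beta}\bar{\alpha})^{\bulp}_a}=({_c\bar{\beta}^{\bulp}_b})({_b\bar{\alpha}^{\bulp}_a})$, which in turn rests on the lift of an arrow class being concentrated in a single component. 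You instead apply two-sided uniqueness to entire paths: $L_a(p)$ ends at $b$ precisely when $R_b(p)$ starts at $a$, and in that case $L_a(p)=R_b(p)$, so for a representative $\sum_p c_p\,p$ of $f$ the forward and backward lifted sums are literally equal in $kQ'(a,b)$ before reducing mod $I'$. This buys a single uniform argument with no compositionality bookkeeping in the quotient, at the cost of working with representatives, whose legitimacy you correctly tie to admissibility (the lifting isomorphism identifies $\bigoplus I'$ with $I$, so the class of the lifted sum does not depend on the representative chosen). Both proofs pivot on the same key fact --- forward and backward lifts coincide whenever their endpoints match --- so the difference is one of packaging rather than substance, but your packaging is slightly more self-contained.
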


\begin{proof}
Since $q$ is a covering of quivers, it has the unique lifting
property of paths. Hence for any pair of vertices $i$ in $Q$ and $a$
in $Q'$ with $q(a)=i$, we have that
$$\xymatrix{
\bigoplus_{q(b)=j}kQ'(a,b)\ar[r]^-{k(q) \atop \sim}\ar[d]
 &kQ(i,j)\ar[d]\\
\bigoplus_{F(b)=j}kQ'/I'(a,b)\ar[r]^-{F} 
 &kQ/I(i,j)
}$$
is a commutative diagram with $F$ an isomorphism. This shows that
$F$ is a covering functor.

For any arrow $i\ \aderecha{\alpha}\ j$ in $Q$ and $q(a)=i$, there is
a unique $b$ in $Q'$ and an arrow $a\ \aderecha{\alpha'}\ b$ with
$q(\alpha')=\alpha$. Hence the class ${_bf^{\bulp}_a}$ of $\alpha'$ in
$kQ'/I'(a,b)$ satisfies that $F({_bf^{\bulp}_a})$ is the class
$f=\bar{\alpha}$ of $\alpha$ in $kQ/I(i,j)$. By symmetry,
${_bf^{\bulp}_a}={^{\bulp}_bf_a}$. For arbitrary $f\in kQ/I(i,j)$, $f$ is
the linear combination $\sum \lambda_if_i$, where $f_i$ is the
product of classes of arrows in $Q$. Observe that for arrows $i\
\aderecha{\alpha}\ j\ \aderecha{\beta}\ m$ we have
$_c(\bar{\beta}\bar{\alpha})^{\bulp}_a=
({_c\bar{\beta}^{\bulp}_b})({^{\bulp}_b\bar{\alpha}_a})=
({_c\bar{\beta}^{\bulp}_b})({_b\bar{\alpha}^{\bulp}_a})$. It follows that
$F$ is balanced. \cuadro
\end{proof}

In the above situation we shall say that the functor $F$ is {\em induced\/} from
a map $q\colon (Q',I')\to (Q,I)$ of quivers with relations.

\section{On Galois coverings}\label{sec2}

\subsection{Galois coverings are balanced} \label{subsec2.1}  
\begin{proposition}
Let $F\colon A\to B$ be a Galois covering, then $F$ is balanced.
\end{proposition}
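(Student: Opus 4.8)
The plan is to unwind the explicit description of the Galois functor on morphisms and then identify both ${_bf^{\bulp}_a}$ and ${^{\bulp}_bf_a}$ as one and the same component of the family defining $f$. First I would recall how $F\colon A\to A/G$ acts on arrows: because $G$ acts freely on objects, for $\phi\in A(a,b)$ the morphism $F\phi\in B(Fa,Fb)$ is the $G$-equivariant family whose $(a',b')$-component equals $g\phi$ when $(a',b')=(ga,gb)$ for the (necessarily unique) $g\in G$, and vanishes for all other pairs $(a',b')$ with $Fa'=Fa$, $Fb'=Fb$.

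Next I would fix objects $a,b$ in $A$ and a morphism $f=({_{b'}f_{a'}})\in B(Fa,Fb)$, and compute the preimage of $f$ under the first bijection $\bigoplus_{Fb'=Fb}A(a,b')\to B(Fa,Fb)$, which sends $(\phi_{b'})_{b'}$ to $\sum_{b'}F(\phi_{b'})$. Evaluating this sum at a pair $(a,c)$ with $Fc=Fb$, the term $F(\phi_{b'})$ contributes only when $ga=a$ and $gb'=c$ for some $g$; freeness forces $g=1$ and hence $c=b'$, so the $(a,c)$-component of the image is simply $\phi_c$. Comparing with $f$ gives $\phi_c={_cf_a}$ for all such $c$, and in particular ${_bf^{\bulp}_a}=\phi_b={_bf_a}$.

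Symmetrically, I would compute the preimage $(\psi_{a'})_{a'}$ of $f$ under the second bijection $\bigoplus_{Fa'=Fa}A(a',b)\to B(Fa,Fb)$. Evaluating $\sum_{a'}F(\psi_{a'})$ at a pair $(c,b)$ with $Fc=Fa$, the term $F(\psi_{a'})$ contributes only when $gb=b$ and $ga'=c$ for some $g$; again freeness forces $g=1$ and $c=a'$, so the $(c,b)$-component equals $\psi_c$. Hence $\psi_{a'}={_bf_{a'}}$, and in particular ${^{\bulp}_bf_a}=\psi_a={_bf_a}$. Combining the two computations yields ${_bf^{\bulp}_a}={_bf_a}={^{\bulp}_bf_a}$ for every $f\in B(Fa,Fb)$, which is exactly the balanced condition.

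The only delicate point is the bookkeeping of the family structure of morphisms in $A/G$ together with the systematic use of freeness: it is precisely the freeness of the $G$-action that guarantees that the ``diagonal'' contributions ($g=1$) are the only surviving ones, so that both one-sided preimages collapse onto the same morphism ${_bf_a}\in A(a,b)$. No representation-theoretic input beyond the definition of the quotient category is required.
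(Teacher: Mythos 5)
Your proof is correct, but it takes a genuinely different route from the paper's. You open up the concrete model of the quotient category $A/G$ --- morphisms as $G$-equivariant families $({_{b'}f_{a'}})$ --- and compute each one-sided lifting componentwise, using freeness of the action to see that only the $g=1$ terms survive; this identifies both ${_bf^{\bulp}_a}$ and ${^{\bulp}_bf_a}$ explicitly with the same ``diagonal'' component ${_bf_a}$ of the family $f$. The paper never unwinds the family structure of $B=A/G$: it takes the row lifting $({_{b'}f^{\bulp}_a})_{b'}$, applies to each component the unique $g_{b'}\in G$ with $g_{b'}(b')=b$, observes that the transported family lies in $\bigoplus_{Fa'=i}A(a',b)$ and still maps to $f$ (because $F\circ g_{b'}=F$), and then invokes the uniqueness half of the covering bijection to conclude that this family must be the column lifting, so that $g_{b'}({_{b'}f^{\bulp}_a})={^{\bulp}_bf_{g_{b'}(a)}}$; evaluating at $b'=b$, where $g_b=1$, gives balancedness. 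Your computation buys explicitness: the two lifts are exhibited as concrete components of $f$, which makes the balanced condition transparent, at the price of relying on the specific construction of $A/G$ and of $F$ on morphisms (a description you state correctly, and which freeness makes well defined). The paper's argument is more structural and slightly more economical: it uses only the $G$-invariance of $F$ and bijectivity of the covering maps, without needing to know what morphisms of $A/G$ look like, and it yields as a by-product the full system of identities relating all components of the two liftings, not just the one at the pair $(a,b)$. Both arguments use the same essential ingredient --- freeness of the $G$-action --- just at different points: you use it to kill off-diagonal contributions, the paper uses it to define the elements $g_{b'}$ and to normalize $g_b=1$.
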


\begin{proof}
Assume $F$ is determined by the action of a group $G$ of
automorphisms of $A$, acting freely on the objects $A_0$.
Let $i,j$ be objects of $B$ and $f\in B(i,j)$. Consider $a,b$ in $A$
with $Fa=i$, $Fb=j$ and $({_{b'}f^{\bulp}_a})_{b'}\in
\bigoplus\limits_{Fb'=j}A(a,b')$ with
$\sum\limits_{Fb'=j}F({_{b'}f^{\bulp}_a})=f$. \\
For each object $b'$ with $Fb'=j$, there is a unique $g_{b'}\in G$
with $g_{b'}(b')=b$. Then $(g_{b'}({_{b'}f^{\bulp}_a}))_{b'}\in
\bigoplus_{b'}A(g_{b'}(a),b)=\bigoplus_{Fa'=i}A(a',b)$
with $\sum_{b'}F(g_{b'}({_{b'}f^{\bulp}_a}))=
\sum_{b'}F({_{b'}f^{\bulp}_a})=f$. Hence
$g_{b'}({_{b'}f^{\bulp}_a})={^{\bulp}_bf_{g_{b'}(a)}}$ for every $Fb'=j$.
In particular, for $g_b=1$ we get ${_bf^{\bulp}_a}=\break {^{\bulp}_bf_a}$. \cuadro
\end{proof}

\subsection{The smash-product}\label{subsec2.2} We say that a $k$-category $B$ is {\em $G$-graded\/} with respect to
the group $G$ if for each pair of objects $i,j$ there is a vector
space decomposition $B(i,j)=\bigoplus_{g\in G}B^g(i,j)$ such
that the composition induces linear maps
$$B^g(i,j)\otimes B^h(j,m)\to B^{gh}(i,m).$$
Then the {\em smash product\/} $B\mathrel{\#}G$ is the $k$-category
with objects $B_0\times G$, and for pairs $(i,g),(j,h)\in B_0\times
G$, the set of morphisms is
$$(B\mathrel{\#}G)((i,g),(j,h))=B^{g^{-1}h}(i,j)$$ 
with compositions induced in natural way.

In \cite{8} it was shown that $B\mathrel{\#}G$ accepts a free action
of $G$ such that
$$(B\mathrel{\#}G)/G\ \aderecha{\sim}\ B.$$
Moreover, if $B=A/G$ is a quotient, then $B$ is a $G$-graded
$k$-category and 
$$(A/G)\mathrel{\#}G\ \aderecha{\sim}\ A.$$

\begin{proposition} \label{compatible}
Let $F\colon A\to B$ be a covering functor and assume that $B$ is a
$G$-graded $k$-category. Then
\begin{itemize}
\item[{\rm (i)}] Assume A accepts a $G$-grading compatible with $F$, that is, $F(A^g(a, b))\subseteq
B^g(Fa,Fb)$, for every pair $a, b \in A_0$ and $g \in G$. Then there is a covering
functor $F\mathrel{\#}G\colon
A\mathrel{\#}G\to B\mathrel{\#}G$ completing a commutative square
$$\xymatrix{
A\mathrel{\#}G\ar[r]^{F\mathrel{\#}G}\ar[d] 
 &B\mathrel{\#}G\ar[d]\\
A\ar[r]^F &B
}$$
where the vertical functors are the natural quotients. Moreover $F$ is balanced if and only if $F\mathrel{\#}G$ is
balanced.
\item[{\rm (ii)}] In case $B$ is a schurian algebra, then $A$ accepts a $G$-grading compatible with
$F$.
\end{itemize}
\end{proposition}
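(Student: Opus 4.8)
The plan is to construct $F\mathrel{\#}G$ explicitly on the smash products, verify the covering axioms one homogeneous degree at a time, reduce balancedness of $F\mathrel{\#}G$ to that of $F$ by matching homogeneous components, and finally, for (ii), transport the (essentially unique) degrees of the schurian category $B$ along the covering. For (i), I would define $F\mathrel{\#}G$ on objects by $(a,g)\mapsto(Fa,g)$ and on morphisms by letting it act as $F$: a morphism $(a,g)\to(b,h)$ in $A\mathrel{\#}G$ is by definition an element of $A^{g^{-1}h}(a,b)$, and the compatibility hypothesis $F(A^{g^{-1}h}(a,b))\subseteq B^{g^{-1}h}(Fa,Fb)=(B\mathrel{\#}G)((Fa,g),(Fb,h))$ guarantees that $F$ carries it to a morphism $(Fa,g)\to(Fb,h)$. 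This is visibly $k$-linear and onto on objects, and functoriality is inherited from $F$ since composition in a smash product is induced by composition in the underlying graded category.

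The crux of (i) --- and the only place the compatibility hypothesis is really used --- is checking the covering axioms for $F\mathrel{\#}G$. Fixing $(a,g)$ and $(j,h)$, the fibre over $(j,h)$ consists of the pairs $(b',h)$ with $Fb'=j$, so the induced map reads
$$\bigoplus_{Fb'=j}A^{g^{-1}h}(a,b')\longrightarrow B^{g^{-1}h}(Fa,j).$$
The point is that the covering bijection $\bigoplus_{Fb'=j}A(a,b')\xrightarrow{\sim}B(Fa,j)$ for $F$ is homogeneous for the two gradings, by compatibility; hence it splits as the direct sum over all $g'\in G$ of the maps $\bigoplus_{Fb'=j}A^{g'}(a,b')\to B^{g'}(Fa,j)$. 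A direct sum of linear maps is bijective exactly when each summand is, so every homogeneous component is bijective, and the choice $g'=g^{-1}h$ yields the displayed bijection; the second family is symmetric. The commutative square is then essentially tautological: $F\mathrel{\#}G$ is $G$-equivariant for the actions $h\cdot(a,g)=(a,hg)$, the vertical quotients forget the $G$-component and the grading, and both composites send $\alpha\in A^{g^{-1}h}(a,b)$ to $F\alpha\in B(Fa,Fb)$.

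For the balanced equivalence I would compare the two covering decompositions of a morphism. Given $\varphi\in(B\mathrel{\#}G)((Fa,g),(Fb,h))=B^{g^{-1}h}(Fa,Fb)$, homogeneity together with compatibility forces each component ${_{b'}\varphi^\bulp_a}$ of the $F$-decomposition to lie in $A^{g^{-1}h}(a,b')$, so that ${_{(b',h)}\varphi^\bulp_{(a,g)}}={_{b'}\varphi^\bulp_a}$ and likewise ${^\bulp_{(b,h)}\varphi_{(a',g)}}={^\bulp_b\varphi_{a'}}$. Thus $F\mathrel{\#}G$ is balanced iff ${_b\varphi^\bulp_a}={^\bulp_b\varphi_a}$ for every homogeneous $\varphi$; since every morphism of $B$ is a sum of homogeneous ones and the balancedness condition is $k$-linear, this is equivalent to $F$ being balanced.

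For (ii), the schurian hypothesis makes each $B(i,j)=\bigoplus_g B^g(i,j)$ at most one-dimensional, hence homogeneous: if $B(i,j)\ne0$ it sits in a single degree $g_{ij}$, and the relation $f=f\,\id_i$ shows $g_{ii}=e$. I would define a grading on $A$ by $A^g(a,b)=A(a,b)$ when $B(Fa,Fb)\ne0$ and $g=g_{Fa,Fb}$, and $A^g(a,b)=0$ otherwise. The covering isomorphism $\bigoplus_{Fb'=j}A(a,b')\cong B(Fa,j)$ forces $\dim_kA(a,b)\le1$ and each restriction $A(a,b)\to B(Fa,Fb)$ to be injective, so $A(a,b)\ne0$ implies $B(Fa,Fb)\ne0$; hence $A(a,b)=\bigoplus_g A^g(a,b)$ is a legitimate (single-summand) decomposition and $F(A^{g_{Fa,Fb}}(a,b))\subseteq B^{g_{Fa,Fb}}(Fa,Fb)$ gives compatibility. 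Multiplicativity is the remaining point: for nonzero $\alpha\in A^{g}(a,b)$, $\beta\in A^{h}(b,c)$ with $g=g_{Fa,Fb}$, $h=g_{Fb,Fc}$, if $\beta\alpha\ne0$ then $F(\beta\alpha)=F\beta\,F\alpha\in B^{gh}(Fa,Fc)$ is nonzero by injectivity of $F$ on $A(a,c)$, so $g_{Fa,Fc}=gh$ and $\beta\alpha\in A^{gh}(a,c)$, while identities land in degree $e$ since each $A(a,a)=k\,\id_a$ maps into $B(Fa,Fa)$. This produces the required compatible $G$-grading on $A$.
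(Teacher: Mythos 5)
Your proof is correct and takes essentially the same approach as the paper's: $F\mathrel{\#}G$ is defined to act as $F$ on objects and on homogeneous components (well-defined by the compatibility hypothesis, which also makes the covering bijections respect the gradings), and in (ii) the schurian hypothesis is used to concentrate each $A(a,b)$ in the single degree carried by $B(Fa,Fb)$. The only difference is one of detail: you spell out the verification of the covering axioms, the balancedness equivalence, and the multiplicativity of the grading, all of which the paper's proof leaves implicit.
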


\begin{proof}
(i): For each $a,b\in A_0$, consider the decomposition 
$A(a,b)=\bigoplus_{g\in G}A^g(a,b)$ and 
$B(Fa,Fb)=\bigoplus_{g\in G}B^g(Fa,Fb)$. Since these decompositions are compatible with $F$,
then $A^g(a,b)=F^{-1}(B^g(Fa,Fb))$, for every $g\in G$. \\
For $\alpha \in (A\mathrel{\#}G)((a,g),(b,h))=A^{g^{-1}h}(a,b)=
F^{-1}(B^{g^{-1}h}(Fa,Fb))$, we have
$$(F\mathrel{\#}G)(\alpha)=F\alpha \in B^{g^{-1}h}(Fa,Fb)=
(B\mathrel{\#}G)((Fa,g),(Fb,h)).$$
(ii): Assume $B$ is schurian and take $a, b \in A_0$ and $g \in G$. Either $B^g(Fa,Fb) =
B(Fa,Fb) \not = 0$, if $A(a,b) \ne 0$ or $B^g(Fa,Fb) = 0$, correspondingly we set $A^g(a, b) = A(a, b)$ or
$A^g(a, b) = 0$. Observe that the composition induces linear maps $A^g(a, b)\otimes A^h(b, c) \to
A^{gh}(a, c)$, hence $A$ accepts a $G$-grading compatible with $F$. \cuadro
\end{proof}

\begin{remark}
In the situation above, the fact that $A$ and $B\mathrel{\#}G$ are connected
categories does not guaranty that $A\mathrel{\#}G$ is connected. For
instance, if $B=A/G$, then $A\mathrel{\#}G=A\times G$.
\end{remark}

The following result is a generalization of Proposition \ref{compatible}(ii).

\begin{proposition}\label{compatible2}
Let $F\colon A\to B$ be a (balanced) covering functor induced from a map
of quivers with relations. Let $F' \colon B' \to B$ be a Galois covering functor induced from a map
of quivers with relations defined by the action of a group $G$. Assume moreover that
$B'$ is schurian. Then $A$ accepts a $G$-grading compatible with $F$ making the following
diagram commutative
$$\xymatrix{
A\mathrel{\#}G\ar[r]^{\ \ F\mathrel{\#}G}\ar[d] 
 &B'\ar[d]^{F'}\\
A\ar[r]^F &B
}$$
\end{proposition}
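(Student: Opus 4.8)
The plan is to pull the $G$-grading that the Galois covering $F'$ puts on $B$ back along $F$ to a compatible $G$-grading on $A$, and then to apply Proposition \ref{compatible}(i), identifying the resulting smash product $B\mathrel{\#}G$ with $B'$. Throughout I write $A=kQ_A/I_A$ and $B=kQ_B/I_B$, with $q\colon Q_A\to Q_B$ the covering map of quivers inducing $F$ as in Subsection \ref{subsec1.8}; the decisive feature of this hypothesis is that $F$ carries each arrow of $Q_A$ to (the class of) a single arrow of $Q_B$.

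First I would record the grading on $B$. Since $F'$ is a Galois covering defined by $G$ we have $B=B'/G$, so by Subsection \ref{subsec2.2} the category $B$ is $G$-graded and the canonical quotient $B\mathrel{\#}G\to B$ is isomorphic to $F'\colon B'\to B$. Because $B'$ is schurian and $F'$ also comes from a covering map of quivers, this grading is explicit: after fixing a representative in $B'$ of each object of $B$, the unique lift to $B'$ of an arrow $\alpha\colon i\to j$ of $Q_B$ starting at the chosen representative over $i$ ends at $g_\alpha\cdot(\text{representative over }j)$ for a unique $g_\alpha\in G$; declaring $\deg\alpha=g_\alpha$ and extending multiplicatively to paths realises each $B^g(i,j)$ as the span of the classes of the degree-$g$ paths, and in particular the defining relations of $B$ are homogeneous.

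Next I would transport this grading along $q$. Declaring each arrow of $Q_A$ to have the degree of its $q$-image and extending multiplicatively gives a $G$-grading on the path category $kQ_A$ for which $F$ is degree preserving on paths. The point to verify is that $I_A$ is homogeneous, so that the grading descends to $A$: given a relation $r\in I_A(a,b)$, decompose it into homogeneous components $r=\sum_g r_g$; since $q$ preserves degrees, the image $kq(r)\in I_B$ decomposes as $\sum_g kq(r_g)$ into its homogeneous components, each of which lies in $I_B$ by the previous step, and then the admissibility of $I_A$ with respect to $q$ (the isomorphisms $\bigoplus_{q(a')=i}I_A(a',b)\cong I_B(i,j)$ of Subsection \ref{subsec1.8}) together with the bijectivity of $kq$ on path spaces forces each $r_g$ to lie in $I_A$. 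Hence the grading descends, and by construction $F(A^g(a,b))\subseteq B^g(Fa,Fb)$ for all $a,b$ and all $g$; that is, $A$ carries a $G$-grading compatible with $F$.

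Finally, Proposition \ref{compatible}(i) produces a covering functor $F\mathrel{\#}G\colon A\mathrel{\#}G\to B\mathrel{\#}G$ sitting in a commutative square over $F$ whose right-hand vertical arrow is the canonical quotient $B\mathrel{\#}G\to B$; composing with the identification $B\mathrel{\#}G\cong B'$ of the first step, under which this quotient becomes $F'$, yields exactly the asserted diagram. Moreover, since $F$ is balanced if and only if $F\mathrel{\#}G$ is, the parenthetical hypothesis is preserved. I expect the main obstacle to be the middle step, namely verifying that the pulled-back degrees genuinely define a grading of $A$, equivalently that the image of each injection $F\colon A(a,b)\to B(Fa,Fb)$ is a graded subspace. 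This is precisely where both the schurian hypothesis on $B'$ (which makes the grading of $B$ arise from unique arrow lifts, and guarantees $B\mathrel{\#}G\cong B'$) and the quiver-with-relations description of $F$ (which makes arrows map to arrows) are used; without the latter, an arrow of $A$ could map to a sum of arrows of distinct degrees, as the behaviour of Example (\ref{subsec1.5}.b) suggests, and no compatible grading would exist.
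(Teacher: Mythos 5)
Your proof is correct, and its overall strategy coincides with the paper's: give $B=B'/G$ its Cibils--Marcos $G$-grading (Subsection \ref{subsec2.2}, \cite{8}), pull it back along $F$ using that $F$ sends arrows to arrows, and conclude with Proposition \ref{compatible}(i). The two arguments differ only in where the grading axioms for $A$ are checked. The paper never grades the path category: it defines $A^g(a,b)$ directly inside $A$ as the span of the classes $\overline{u}$ of paths $u\colon a\to b$ with $F(\overline{u})\in B^g(Fa,Fb)$, and verifies the multiplicativity $A^g(a,b)\otimes A^h(b,c)\to A^{gh}(a,c)$ by lifting $F(\overline{vu})$ to $B'$ and identifying the lift with $\overline{(gv')u'}$; this identification is exactly where the schurian hypothesis on $B'$ is invoked. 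You instead grade $kQ_A$ by arrow degrees and prove that $I_A$ is a homogeneous ideal, using that $I_B$ is homogeneous together with the admissibility isomorphisms $\bigoplus_{q(a')=i}I_A(a',b)\cong I_B(i,j)$ and unique path lifting; descent of the grading and the multiplicativity in $A$ are then automatic from the multiplicativity of the grading of $B$. This route buys a cleaner verification in which the schurian hypothesis does much less work than you suggest: the ``explicitness'' of the grading of $B$ (arrow classes homogeneous, $B^g(i,j)$ spanned by degree-$g$ path classes, $I_B$ homogeneous) already follows from unique lifting along the quiver covering inducing $F'$, not from $\dim_kB'(x,y)\le 1$. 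On one point you are more careful than the paper: Proposition \ref{compatible}(i) produces a square with $B\mathrel{\#}G$ in the corner, whereas the asserted diagram has $B'$ there, and your explicit appeal to the identification $(B'/G)\mathrel{\#}G\cong B'$ of \cite{8}, under which the canonical quotient becomes $F'$, supplies the step that the paper leaves implicit in its final sentence.
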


\begin{proof}
Let $A=k\Delta /J$, $B=kQ/I$ and $B' = kQ'/I'$ be the corresponding presentations
as quivers with relations, $F$ induced from the map $\delta \colon \Delta \to Q$, while
$F'$ induced from the map $q \colon Q' \to Q$. For each vertex $a$ in $\Delta$ fix a vertex $a'$ in $Q'$
such that $F'a' = Fa$.\\
Consider an arrow $a\stackrel{\alpha}{\rightarrow} b$ in $\Delta$ and $\overline{\alpha}$ the corresponding element of  $A$. We claim
that there exists an element $g_\alpha \in G$ such that $F(\overline{\alpha}) \in B^{g_\alpha}(Fa,Fb)$. Indeed, we get
$F(\overline{\alpha})=\overline{\beta}= F'(\overline{\beta'})$ for arrows $Fa \stackrel{\beta}{\rightarrow} Fb$ and $a' \stackrel{\beta'}{\rightarrow} g_\alpha b'$ for a unique $g_\alpha \in G$.
Therefore $F(\overline{\alpha}) \in B^{g_\alpha}(Fa,Fb)$. We shall define $A^{g_\alpha}(a, b)$ as containing the space $k\overline{\alpha}$.
For this purpose, consider $g \in G$ and any vertices $a, b$ in $\Delta$, then $A^g(a, b)$ is the space
generated by the classes $\overline{u}$ of the paths $u : a \to b$ such that $F(\overline{u}) \in B^g(Fa,Fb)$. Since
the classes of the arrows in $\Delta$ generate $A$, then $A(a, b) = \bigoplus_{g\in G} A^g(a, b)$. We shall prove that there are linear maps
$$A^g(a, b) \otimes A^h(b, c) \to A^{gh}(a, c).$$
Indeed, if $\overline{u} \in A^g(a, b)$ and $\overline{v} \in A^h(b, c)$ for paths $u : a \to b$ and $v : b \to c$ in $\Delta$, let
$F(\overline{u}) = F'(\overline{u'})$ and $F(\overline{v}) = F'(\overline{v'})$ for paths $u' : a' \to gb'$ and $v' : b' \to hc'$ in $Q'$. Since
$B'$ is schurian then the class of the lifting of $F(\overline{vu})$ to $B'$ is $\overline{(gv')u'}$. Therefore
$$F(\overline{v})F(\overline{u}) = F'(\overline{(gv')u'}) \in B^{gh}(Fa,Fb).$$
By definition, the $G$-grading of $A$ is compatible with $F$. We get the commutativity
of the diagram from Proposition \ref{compatible}. \cuadro
\end{proof}

\subsection{Universal Galois covering}\label{subsec2.4} Let $B=kQ/I$ be a finite dimensional $k$-algebra. According to
\cite{5} there is a $k$-category $\tilde{B}=k\tilde{Q}/\tilde{I}$ and
a Galois covering functor $\tilde{F}\colon \tilde{B}\to B$ defined by
the action of the fundamental group $\pi_1(Q,I)$ which is {\em
universal\/} among all the Galois coverings of $B$, that is, for any
Galois covering $F\colon A\to B$ there is a covering functor
$F'\colon \tilde{B}\to A$ such that $\tilde{F}=FF'$. In fact, the following
more general result is implicitely shown in \cite{5}:

\begin{proposition} {\rm \cite{5}.} \label{universal}
The universal Galois covering $\tilde{F}\colon \tilde{B}\to B$ is
universal among all (balanced) covering functors $F\colon A\to B$
induced from a map $q\colon (Q',I')\to (Q,I)$ of quivers with
relations, where $A=kQ'/I'$.\cuadro
\end{proposition}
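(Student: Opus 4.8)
The plan is to produce the covering functor $F'\colon\tilde{B}\to A$ with $\tilde{F}=FF'$ directly, the point being that the universal property proved in \cite{5} for Galois coverings rests only on the lifting property of the universal cover of the quiver and never on the freeness of a group action. So I would first strip the statement to its combinatorial core. Both $\tilde{F}$ and $F$ are, by hypothesis, induced from covering maps of quivers with relations, say $\tilde{p}\colon\tilde{Q}\to Q$ and $q\colon Q'\to Q$, where $\tilde{I}$ (the ideal defining $\tilde{B}$) and $I'$ (the ideal defining $A$) are the respective liftings of $I$. It then suffices to produce a covering map of quivers $\tilde{q}\colon\tilde{Q}\to Q'$ with $q\tilde{q}=\tilde{p}$ that is compatible with the ideals, since the induced functor $F'$ will then, by Proposition \ref{balanced}, be a balanced covering functor and $q\tilde{q}=\tilde{p}$ will descend to $FF'=\tilde{F}$.

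Next I would carry out the lifting at the level of quivers. Fix a base vertex $o$ of $Q$ together with vertices $\tilde{o}$ of $\tilde{Q}$ and $o'$ of $Q'$ lying over it. Since $\tilde{Q}$ is the universal cover of $Q$ relative to the homotopy relation defined by $I$ — so that $\tilde{Q}$ is simply connected for this relation — and since $I$ is admissible with respect to $q$, the covering $q\colon(Q',I')\to(Q,I)$ is itself compatible with that homotopy relation. The standard lifting criterion for covering maps then yields a unique morphism of quivers $\tilde{q}\colon\tilde{Q}\to Q'$ with $\tilde{q}(\tilde{o})=o'$ and $q\tilde{q}=\tilde{p}$; being a factor of the covering map $\tilde{p}$ through the covering map $q$, the map $\tilde{q}$ is again a covering map of quivers.

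The step I would treat most carefully — and the one I expect to be the real obstacle — is the compatibility of $\tilde{q}$ with the relations, namely that $\tilde{I}$ makes $I'$ admissible with respect to $\tilde{q}$, so that $F'\colon k\tilde{Q}/\tilde{I}\to kQ'/I'=A$ is defined and Proposition \ref{balanced} applies. For this I would use that $\tilde{I}$ is the lifting of $I$ along $\tilde{p}$ while $I'$ is the lifting of $I$ along $q$, and that these lifting constructions are transitive along the factorization $\tilde{p}=q\tilde{q}$: lifting $I$ along $\tilde{p}$ agrees with lifting $I'$ along $\tilde{q}$. Concretely, by unique path-lifting a relation of $I$ lifted to $\tilde{Q}$ is carried by $k\tilde{q}$ onto the corresponding relation of $I$ lifted to $Q'$, and these generate $I'$. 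Granting this, $F'$ is a balanced covering functor and $q\tilde{q}=\tilde{p}$ gives $FF'=\tilde{F}$, establishing the universal property. I expect no difficulty specific to the non-Galois case, since the argument invokes no group action anywhere; the only genuine work is verifying that the admissibility of $I$ with respect to $q$ is exactly what makes the two homotopy relations match, so that both the lifting criterion and the ideal comparison go through.
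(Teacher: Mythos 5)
The paper gives no proof of this proposition at all: the statement is followed immediately by the box and is attributed to \cite{5} as a result ``implicitly shown'' there. So your proposal can only be compared with the argument implicit in that reference, and your overall architecture is indeed the natural reconstruction of it: factor the quiver map $\tilde{p}\colon\tilde{Q}\to Q$ through $q\colon Q'\to Q$ by the lifting criterion (using that $\tilde{Q}$ is simply connected for the homotopy relation of $(Q,I)$), check that $\tilde{q}$ is again a covering of quivers, check compatibility of ideals, and invoke Proposition \ref{balanced} to get the covering functor $F'$ with $FF'=\tilde{F}$.

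There is, however, a genuine gap at exactly the step you flag and then ``grant'': you never prove that admissibility of $I$ with respect to $q$ makes $q$ compatible with the homotopy relation, and the justification you offer (``by unique path-lifting a relation of $I$ lifted to $\tilde{Q}$ is carried by $k\tilde{q}$ onto the corresponding relation of $I$ lifted to $Q'$'') presupposes the very point at issue, namely that a relation of $I$ \emph{can} be lifted to $Q'$: that all paths occurring in it, lifted from a fixed source $a$ over $i$, end at a \emph{common} target $b$, and that the lifted combination lies in $I'$. Unique path-lifting alone does not give this --- the lifts of the different paths in a relation can perfectly well end at different vertices (a commutativity relation $\beta\alpha-\delta\gamma$ on a square lifts, on the connected double cover of the square, to a difference of paths with distinct targets; correspondingly that ideal is not admissible with respect to that cover). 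The correct argument needs minimality together with the direct-sum isomorphism in the definition of admissibility: let $\rho=\sum_i\lambda_i w_i\in I(i,j)$ be a \emph{minimal} relation and $q(a)=i$, and let $(\rho_b)_b\in\bigoplus_{q(b)=j}I'(a,b)$ be the preimage of $\rho$ under the isomorphism $\bigoplus_{q(b)=j}I'(a,b)\to I(i,j)$. By unique path-lifting, distinct paths starting at $a$ have distinct images, so the elements $kq(\rho_b)$, for distinct $b$, are supported on pairwise disjoint sets of paths; moreover each $kq(\rho_b)$ lies in $I(i,j)$, so each is a proper or full subsum of $\rho$. Minimality then forces $\rho=kq(\rho_{b_0})$ for a single $b_0$, i.e. all the $w_i$ lift from $a$ to paths ending at one vertex $b_0$ and $\sum_i\lambda_i\tilde{w}_i=\rho_{b_0}\in I'(a,b_0)$. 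With this lemma in hand, homotopic walks lift to walks with equal endpoints, your lifting criterion applies, and the remaining points of your outline (the covering property of $\tilde{q}$, the ``transitivity'' of the lifted ideals --- which is this same lemma applied once more --- and the descent of $q\tilde{q}=\tilde{p}$ to $FF'=\tilde{F}$) go through; note also that surjectivity of $\tilde{q}$ on vertices requires $Q'$ connected, which is implicitly assumed throughout.
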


\section{Cleaving functors}\label{sec3} 

\subsection{Balanced coverings are cleaving functors}\label{subsec3.1} Consider the $k$-linear functor $F\colon A\to B$ and the natural
transformation\break $F(a,b)\colon A(a,b)\to B(Fa,Fb)$ in two variables.
The following is the main observation of this work.

\begin{theorem}
Assume $F\colon A\to B$ is a balanced covering, then the natural
transformation $F(a,b)\colon A(a,b)\to B(Fa,Fb)$ admits a retraction
$E(a,b)\colon B(Fa,Fb)\to A(a,b)$ of functors in two variables $a,b$
such that $E(a,b)F(a,b)=\unog_{A(a,b)}$ for all $a,b\in A_0$.
\end{theorem}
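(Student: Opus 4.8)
The goal is to construct, from a balanced covering $F\colon A\to B$, a natural retraction $E(a,b)\colon B(Fa,Fb)\to A(a,b)$ of the morphism $F(a,b)\colon A(a,b)\to B(Fa,Fb)$, naturally in both variables. The plan is to define $E$ using the balanced lifting data already available, namely the element $_bf^{\bulp}_a={^{\bulp}_bf_a}$ associated to each $f\in B(Fa,Fb)$, and then to verify the two required properties: that $E$ is a retraction (i.e.\ $E(a,b)F(a,b)=\unog_{A(a,b)}$), and that $E$ is natural in $a$ and $b$.

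First I would define the map. For $f\in B(Fa,Fb)$, set $E(a,b)(f)={_bf^{\bulp}_a}\in A(a,b)$. Since $F$ is a covering functor, the bijection $\bigoplus_{Fb'=Fb}A(a,b')\to B(Fa,Fb)$ sends the family $({_{b'}f^{\bulp}_a})_{b'}$ to $f$, so picking out the $b'=b$ component gives a well-defined $k$-linear map, and this is $k$-linear in $f$ because the lifting bijection is. The retraction property is then nearly immediate: for $\varphi\in A(a,b)$, applying $F$ gives $F\varphi\in B(Fa,Fb)$, and by the defining property of the lifting (the $b'=b$ component of the preimage of $F\varphi$ is $\varphi$ itself, since $\varphi$ already lies in $A(a,b)$) we recover $E(a,b)(F\varphi)=\varphi$, so $E(a,b)F(a,b)=\unog_{A(a,b)}$.

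The substantive part, and where I expect the main obstacle, is checking naturality in the two variables; this is precisely where the balanced hypothesis is essential. Naturality in $a$ means that for a morphism $u\colon a_1\to a_2$ in $A$ the appropriate square commutes, i.e.\ $E$ commutes with the contravariant action $B(Fa_2,Fb)\to B(Fa_1,Fb)$ coming from $Fu$; naturality in $b$ is the analogous covariant statement. Unwinding these, I would reduce each to an identity of the form $_b(gf)^{\bulp}_a=({_c g^{\bulp}_b})({_b f^{\bulp}_a})$ for composable $f,g$ in $B$ — that is, that the chosen liftings are compatible with composition in the relevant component. The coherent version of this compatibility is exactly the computation carried out in the proof of Proposition \ref{balanced}, where one shows $_c(\bar\beta\bar\alpha)^{\bulp}_a=({_c\bar\beta^{\bulp}_b})({^{\bulp}_b\bar\alpha_a})=({_c\bar\beta^{\bulp}_b})({_b\bar\alpha^{\bulp}_a})$, and the crucial middle equality there uses $_b\alpha^{\bulp}_a={^{\bulp}_b\alpha_a}$, i.e.\ the balanced condition. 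Without balancedness the left lifting $_b(\cdot)^{\bulp}_a$ and the right lifting ${^{\bulp}_b(\cdot)_a}$ would not agree, and Example (\ref{subsec1.5}.c) shows the two genuinely differ for a non-balanced covering, so no such single retraction natural in both variables can exist there.

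Concretely, I would organize the naturality verification by first proving the composition-compatibility identity for liftings of composable morphisms in $B$ using the covering-functor bijections together with the balanced equality $_bf^{\bulp}_a={^{\bulp}_bf_a}$, and then deducing the two naturality squares from it. The one formal subtlety to handle carefully is that $E$ must be simultaneously a retraction in \emph{both} variables by the \emph{same} map, so I would phrase $E(a,b)$ once via $_bf^{\bulp}_a$ and invoke the balanced identity to see it equally computes ${^{\bulp}_bf_a}$; this symmetric description is what lets the single family $E(a,b)$ serve as a natural transformation in $a$ (where the right-lifting description is convenient) and in $b$ (where the left-lifting description is convenient) at once.
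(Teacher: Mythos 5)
Your definition $E(a,b)(f)={_bf^{\bulp}_a}={^{\bulp}_bf_a}$, your retraction argument, and your identification of where balancedness enters (one map admitting two lifting descriptions, one per variable) all agree with the paper. The gap is in the naturality verification, which is the real content of the theorem and which you only sketch. You propose to reduce both naturality squares to a composition-compatibility identity of the form ${_c(gf)^{\bulp}_a}=({_cg^{\bulp}_b})({_bf^{\bulp}_a})$ for arbitrary composable $f,g$ in $B$. This single-term identity is false for a general covering: applying $F$ to the family $\bigl(\sum_{Fb'=Fb}{_cg^{\bulp}_{b'}}\circ{_{b'}f^{\bulp}_a}\bigr)_c$ and invoking uniqueness of liftings gives the correct formula
$${_c(gf)^{\bulp}_a}\;=\;\sum_{Fb'=Fb}{_cg^{\bulp}_{b'}}\circ{_{b'}f^{\bulp}_a},$$
a sum over the whole fiber of the intermediate object. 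The single-term version you quote from the proof of Proposition \ref{balanced} holds there only because a covering map of quivers has unique lifting of paths; it is not available for an abstract balanced covering. Moreover, even the corrected (summed) formula does not formally yield the squares: for $\beta\in A(b,b')$ and the source-fixed lifting one gets $E(a,b')(F\beta\circ f)=\beta\circ{_bf^{\bulp}_a}+\sum_{b''\ne b}{_{b'}(F\beta)^{\bulp}_{b''}}\circ{_{b''}f^{\bulp}_a}$, and the extra terms do not visibly vanish.

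What actually makes the proof work --- and this is the paper's argument, which needs no composition formula at all --- is that in each naturality square one factor lies in the image of $F$, and one must use the lifting adapted to that variable. The target-fixed lifting of $F\beta$ at $b'$ is concentrated in the single component $\beta$; hence the family $(\beta\circ{^{\bulp}_bf_{a'}})_{a'}$, whose image under $F$ sums to $F\beta\circ f$, must by uniqueness be the target-fixed lifting of $F\beta\circ f$, giving ${^{\bulp}_{b'}(F\beta\circ f)_a}=\beta\circ{^{\bulp}_bf_a}$: the right (target-fixed) lifting is natural in $b$ with no balancedness needed. Dually, ${_b(h\circ F\alpha)^{\bulp}_a}={_bh^{\bulp}_{a'}}\circ\alpha$, so the left (source-fixed) lifting is natural in $a$. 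Balancedness is then used exactly once, to say these two liftings are the same map $E$, which is therefore natural in both variables. Note also that you have the pairing backwards: you say the right lifting is convenient for naturality in $a$ and the left lifting for naturality in $b$, whereas the computation above shows it is the other way around --- a sign that the square-by-square verification was not actually carried out.
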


\begin{proof}
Set $E(a,b)\colon B(Fa,Fb)\to A(a,b)$, $f\mapsto {^{\bulp}_bf_a}$ which
is a well defined map. For any $\alpha \in A(a,a')$, $\beta\in A(b,b')$,
we shall prove the commutativity of the diagrams:
$$\xymatrix{
B(Fa,Fb)\ar[r]^-{E(a,b)}\ar[d]_{B(Fa,F\beta)}
 &A(a,b)\ar[d]^{A(a,\beta)}\\ 
B(Fa,Fb')\ar[r]_-{E(a,b')} &A(a,b')
}\qquad\qquad \xymatrix{
B(Fa',Fb)\ar[r]^-{E(a',b)}\ar[d]_{B(F\alpha,Fb)}
 &A(a',b)\ar[d]^{A(\alpha,b)}\\ 
B(Fa,Fb)\ar[r]_-{E(a,b)} &A(a,b)
}$$
For the sake of clarity, let us denote by $\circ$ the composition of maps. 
Indeed, let $f\in B(Fa,Fb)$ and calculate 
$\sum_{Fa'=Fa}F(\beta \circ {^{\bulp}_bf_{a'}})=F\beta \circ f$,
hence
$$A(a,\beta)\circ E(a,b)(f)=\beta \circ {^{\bulp}_bf_a}=
{^{\bulp}_{b'}}(F\beta \circ f)_a =E(a,b')\circ B(Fa,F\beta )(f),$$
and the first square commutes. Moreover, let $h\in B(Fa',Fb)$ and
calculate\break $\sum_{Fb'=Fb}F({_{b'}h^{\bulp}_{a'}} \circ
\alpha)=h\circ F\alpha$ and therefore ${_bh^{\bulp}_{a'}} \circ \alpha
={_b(h\circ F\alpha)^{\bulp}_a}$. Using that $F$ is balanced we get
that $E(a,b)\circ B(Fa,Fb)(h)={^{\bulp}_b}(h\circ
F\alpha)_a={^{\bulp}_bh_{a'}} \circ \alpha =A(\alpha,b)\circ E(a',b)(h)$. \cuadro
\end{proof}

Given a $k$-linear functor $F\colon A\to B$ the composition
$F_{\bulp}F_\lambda \colon {_A\Mod} \to {_A\Mod}$ is connected to the
identity $\unog$ of ${_A\Mod}$ by a {\em canonical transformation\/}
$\varphi \colon F_{\bulp}F_\lambda \to \unog$\break determined by
$F_{\bulp}F_\lambda A(a,-)(b)=\bigoplus_{Fb'=Fb}A(a,b')\to
A(a,b)$, $(f_{b'})\mapsto f_b$, see \cite[page~234]{1}. Following
\cite{1}, $F$ is a {\em cleaving functor\/} if the canonical
transformation $\varphi$ admits a natural section $\varepsilon \colon \unog
\to F_{\bulp}F_\lambda$ such that $\varphi (X)\varepsilon
(X)=\unog_X$ for each $X\in {_A\Mod}$. The following statement, essentially from \cite{1}, yields Theorem \ref{teorema 1} in the Introduction.

\begin{Corollary} \label{cleaving}
Let $F\colon A\to B$ be a balanced covering, then $F$ is a cleaving
functor.
\end{Corollary}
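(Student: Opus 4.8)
The plan is to deduce Corollary \ref{cleaving} directly from the Theorem immediately preceding it, by transporting the retraction $E(a,b)$ of $F(a,b)$ on $\Hom$-spaces up to a natural section of the canonical transformation $\varphi\colon F_\bulp F_\lambda\to\unog$ on modules. The key point is that the two-variable retraction $E$ is precisely the data needed to invoke the definition of a \emph{cleaving functor} from \cite{1}: once $\varphi$ admits a natural section $\varepsilon$ with $\varphi(X)\varepsilon(X)=\unog_X$, we are done by definition. So the work is to produce $\varepsilon$ from $E$.

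First I would recall the explicit description of $\varphi$ on representable modules, namely $\varphi(A(a,-))(b)\colon\bigoplus_{Fb'=Fb}A(a,b')\to A(a,b)$, $(f_{b'})_{b'}\mapsto f_b$, which identifies $F_\bulp F_\lambda A(a,-)(b)$ with $\bigoplus_{Fb'=Fb}A(a,b')$. Under the bijections from the covering axiom, this summand is $\bigoplus_{Fb'=Fb}A(a,b')\cong B(Fa,Fb)=F_\bulp B(Fa,-)(b)$, and $\varphi$ picks out the component $f\mapsto {_bf^\bulp_a}$. I would then define, on each representable $A(a,-)$, the section $\varepsilon(A(a,-))(b)\colon A(a,b)\to\bigoplus_{Fb'=Fb}A(a,b')$ to be the map induced by $E(Fa,Fb)\colon B(Fa,Fb)\to A(a,b)$ composed with the inverse of the above identification; concretely it sends $g\in A(a,b)$ to the family $({_b{}'(Fg)^\bulp_a})_{b'}$ supported so that its $b$-component is ${_b(Fg)^\bulp_a}$, which by the balanced condition equals ${^\bulp_b(Fg)_a}$. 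Naturality of $\varepsilon$ in the representable variable is exactly the commutativity of the two squares established in the Theorem, since those squares assert that $E(a,b)$ is compatible with post- and pre-composition, i.e.\ that $E$ is a morphism of functors in both arguments.

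Next I would verify the section identity $\varphi(A(a,-))\varepsilon(A(a,-))=\unog$ on representables. Tracing $g\in A(a,b)$ through $\varepsilon$ and back through $\varphi$ returns the $b$-component of the family, which is the class determined by ${_b(Fg)^\bulp_a}$; but $E(a,b)F(a,b)=\unog_{A(a,b)}$ from the Theorem says precisely that ${^\bulp_b(Fg)_a}=g$, and balancedness identifies ${_b(Fg)^\bulp_a}={^\bulp_b(Fg)_a}$, so the composite is the identity. This settles the cleaving property on all representable functors $A(a,-)$.

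The main obstacle, and the step I would treat most carefully, is the passage from representable modules to arbitrary $X\in{_A\Mod}$. Here I would appeal to the fact that $F_\bulp F_\lambda$ and $\unog$ both commute with direct limits (cokernels and arbitrary direct sums), that $\varphi$ is a natural transformation between such functors, and that every module is a direct limit (indeed a cokernel of a map between coproducts) of representables; a natural section defined and compatible on representables extends uniquely to a natural section on all of ${_A\Mod}$ by right-exactness and continuity, and the identity $\varphi\varepsilon=\unog$ is preserved under these colimits. This is the standard argument of \cite{1} for reducing cleaving to the representable case, and I would simply cite it rather than reconstruct the colimit bookkeeping. Thus the corollary follows, and Theorem \ref{teorema 1} is obtained as the module-level consequence that $\varphi(X)\varepsilon(X)=\unog_X$ exhibits $X$ as a direct summand of $F_\bulp F_\lambda X$.
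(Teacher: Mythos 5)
Your proposal follows the paper's own architecture: define the section $\varepsilon$ on the representables $A(a,-)$, verify the required identities there, and extend to all of ${_A\Mod}$ using that $F_{\bulp}F_\lambda$ is exact and preserves direct sums and projectives, citing \cite{1} for the reduction. Your concrete formula $g\mapsto ({_{b'}(Fg)^{\bulp}_a})_{b'}$ agrees with the paper's $\varepsilon$: by uniqueness of lifts this family is just $g$ placed in the slot $b'=b$ and $0$ elsewhere, i.e.\ the canonical inclusion. (A small slip: describing this map as ``induced by $E(Fa,Fb)$ composed with the inverse of the identification'' is a type mismatch, since $E$ maps \emph{into} $A(a,b)$; the map you actually use is $F$, not $E$.) Your verification of $\varphi(A(a,-))\varepsilon(A(a,-))=\unog$ is correct, as is the closing colimit argument.

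The gap is the naturality step. Naturality of $\varepsilon$ is \emph{not} ``exactly the commutativity of the two squares established in the Theorem.'' Under the identification $F_{\bulp}F_\lambda A(a,-)\cong B(Fa,F(-))$ coming from the Lemma in (\ref{subsec1.1}), the Theorem's two squares say that $E$ --- equivalently, the canonical projection $\varphi$ restricted to representables --- is natural in both variables; that is where balancedness is truly indispensable (for the non-balanced covering of Example (\ref{subsec1.5}.c) the componentwise projection fails to be a morphism of $A$-modules, which is precisely why that covering is not cleaving). What your $\varepsilon$ needs is a different statement: for $h\in A(b,c)$, the action of $h$ on $F_{\bulp}F_\lambda A(a,-)$, given by the matrix of lifts $\bigl(A(a,{_{c'}(Fh)^{\bulp}_{b'}})\bigr)$, must carry the slot-$b$ inclusion to the slot-$c$ inclusion; this amounts to ${_{c'}(Fh)^{\bulp}_{b}}=h$ if $c'=c$ and $=0$ otherwise, equivalently to multiplicativity/uniqueness of the lifts. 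That is exactly what the paper's displayed square checks (writing the action with the fixed-target lifts ${^{\bulp}_{c'}Fh_{b'}}$ and invoking balancedness to convert them into fixed-source lifts). The fact you need is true and takes one line, but the squares you cite do not deliver it, so as written the key verification is missing; insert that computation and your proof is complete.
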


\begin{proof}
Observe that $F_{\bulp}F_\lambda$ is exact, preserves direct sums and
projectives (the last property holds since $F_{\bulp}B(i,-)= \oplus_{Fa=i}A(a,-)$). Hence to define $\varepsilon \colon \unog \to F_{\bulp}F_\lambda$ it is enough 
to define $\varepsilon (A(a,-))\colon
A(a,-)\to F_{\bulp}F_\lambda A(a,-)$ with the desired
properties. For $b\in A_0$, consider $\varepsilon_b\colon A(a,b)\to
\bigoplus_{Fb'=Fb}A(a,b')=F_{\bulp}F_\lambda A(a,-)(b)$ the
canonical inclusion. For $h\in A(b,c)$ we shall prove the
commutativity of the following diagram:
$$\xymatrix{
A(a,b)\ar[r]^-{\varepsilon_b}\ar[d]_{A(a,h)}
 &\bigoplus_{Fb'=Fb}A(a,b')\ar[d]^{(A(a,{^{\bulp}_{c'}Fh_{b'}})}\\
A(a,c)\ar[r]^-{\varepsilon_{c}}
 &\bigoplus_{Fc'=Fc}A(a,c')
}$$
Let $f\in A(a,b)$, since $F$ is balanced
$A(a,{^{\bulp}_{c'}Fh_{b'}})\circ \varepsilon_b(f)=
{^{\bulp}_{c'}Fh_b} \circ f={_{c'}Fh^{\bulp}_b}\circ
f=\varepsilon_{c}\circ A(a,h)(f)$, since ${_{c'}Fh^{\bulp}_b} =h$ if $c'=c$ and 
it is $0$ otherwise. This is what we wanted to show. \cuadro
\end{proof}

\section{On the representation type of categories}\label{sec4} 

\subsection{Representation-finite case}\label{subsec4.1} Recall that a $k$-category $A$ is said to be {\it locally representation-finite}
if for each object $a$ of $A$ there are only finitely many indecomposable $A$-modules $X$, up to isomorphism, such that $X(a) \ne 0$.
For a cleaving functor $F\colon A\to B$ is was observed in \cite{1}
that in case $B$ is of locally representation-finite then so is $A$. 
In particular this holds when $F$ is a Galois
covering by \cite{4}. We shall generalize this result for covering
functors. 

Part (a) of Theorem \ref{teorema 2} in the Introduction is the following:

\begin{theorem}
Assume that $k$ is algebraically closed and let $F\colon A\to B$ be a covering induced from a map of quivers with relations.
Then $B$ is locally representation-finite if and only if so is $A$. 
Moreover in this case the functor $F_\lambda
\colon {_A\mod} \to {_B\mod}$ preserves indecomposable modules and
Auslander-Reiten sequences.
\end{theorem}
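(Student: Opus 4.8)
The plan is to establish the two directions of the equivalence separately, exploiting the machinery already built: the cleaving property (Corollary \ref{cleaving}) handles one direction abstractly, while the covering-map structure is needed for the converse. For the direction ``$B$ locally representation-finite $\Rightarrow$ $A$ locally representation-finite,'' I would first note that a covering functor induced from a map of quivers with relations is balanced (Proposition \ref{balanced}), hence cleaving (Corollary \ref{cleaving}). Then I invoke the observation attributed to \cite{1} in Section \ref{subsec4.1}: for a cleaving functor, local representation-finiteness of $B$ passes to $A$. Concretely, for a fixed object $a \in A_0$, each indecomposable $A$-module $X$ with $X(a) \ne 0$ is, by Theorem \ref{teorema 1}, a direct summand of $F_{\bulp}F_\lambda X$; since $F_\lambda X$ is a $B$-module and $B$ is locally representation-finite, $F_\lambda X$ decomposes into finitely many indecomposable types, and pulling back via $F_{\bulp}$ controls the support at $a$, bounding the number of possible $X$.

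For the converse direction ``$A$ locally representation-finite $\Rightarrow$ $B$ locally representation-finite,'' the cleaving property alone is insufficient, so here I would use the explicit quiver-covering structure. The natural tool is the push-down functor $F_\lambda \colon {_A\mod} \to {_B\mod}$: I would show that every indecomposable $B$-module arises (up to the finite ambiguity of fibers) as a summand of $F_\lambda Y$ for some indecomposable $A$-module $Y$. Since $F$ has finite order over each vertex (the fibers are finite by the covering-map hypothesis and Lemma \ref{lema1.3}), and since $A$ is locally representation-finite, there are only finitely many indecomposable $A$-modules supported over any given fiber, so their push-downs yield finitely many indecomposable $B$-modules at each object of $B$. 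This is the classical covering argument from \cite{4}, adapted from the Galois setting to the quiver-covering setting.

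For the ``moreover'' clause — that $F_\lambda$ preserves indecomposables and Auslander--Reiten sequences — I would argue that in the locally representation-finite situation the functor $F_\lambda$ is \emph{well-behaved} in the sense developed for Galois coverings in \cite{4}. The key inputs are: $F_\lambda$ preserves projectives (stated just after the definition of the push-down) and, by Lemma \ref{igual} together with balancedness, $F_\lambda = F_\rho$ also preserves injectives; combined with exactness this makes $F_\lambda$ compatible with the Auslander--Reiten translate. To see that $F_\lambda$ sends indecomposables to indecomposables, I would compute endomorphism rings: for an indecomposable $X \in {_A\mod}$, the adjunction together with the cleaving section $\varepsilon$ lets me relate $\mathrm{End}_B(F_\lambda X)$ to $\mathrm{End}_A(X)$, and local finiteness forces the former to be local. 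Preservation of almost-split sequences then follows by applying the exact functor $F_\lambda$ to an AR-sequence in ${_A\mod}$ and checking that the middle term and the irreducibility of the maps survive.

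The main obstacle I anticipate is the converse direction and the indecomposability claim: passing from finiteness in $A$ to finiteness in $B$ genuinely requires the rigidity of the fibers supplied by the covering-map hypothesis, and it is precisely here that part (a) restricts to functors \emph{induced from a map of quivers with relations} rather than arbitrary balanced coverings. The subtle point is that distinct indecomposable $A$-modules in the same $G$-like fiber may push down to the same $B$-module, so I must argue that $F_\lambda$ is injective on isomorphism classes only up to this controlled finite ambiguity, and that the push-down of an indecomposable remains indecomposable — which is exactly where the absence of a group structure (the whole motivation of the paper) forces me to replace orbit-counting arguments with the endomorphism-ring computation via the cleaving section.
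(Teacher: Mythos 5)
Your first direction ($B$ locally representation-finite $\Rightarrow$ $A$ locally representation-finite) matches the paper: balanced by Proposition \ref{balanced}, cleaving by Corollary \ref{cleaving}, then the direct-summand argument via Theorem \ref{teorema 1}. But your converse direction and your ``moreover'' clause have a genuine gap: the missing idea is the factorization through the universal Galois covering. The paper invokes Proposition \ref{universal} to obtain a covering functor $F'\colon \tilde{B}\to A$ with $\tilde{F}=FF'$, where $\tilde{F}\colon \tilde{B}\to B$ is the universal Galois covering of $B=kQ/I$; this factorization is precisely what the hypothesis ``induced from a map of quivers with relations'' buys, and you never use it. With it, the converse is short: $F'$ is itself a covering induced from a map of quivers with relations, so by the first direction applied to $F'$ the category $\tilde{B}$ is locally representation-finite, and then the theorem of \cite{9} on the universal cover gives that $B$ is representation-finite. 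Your plan instead is to ``adapt the classical covering argument from \cite{4}'' to show that every indecomposable $B$-module is a summand of some $F_\lambda Y$; but that argument depends essentially on the free group action (the decomposition $F_{\bulp}F_\lambda X\cong\bigoplus_{g\in G}{}^{g}X$ and orbit counting), which is exactly what is absent here --- you acknowledge the tension but offer no substitute. Note also that your premise of finite order is unjustified: a covering induced from a map of quivers with relations can have infinite fibers (the universal cover itself, or Example (\ref{subsec1.5}.d)); Lemma \ref{lema1.3} only gives constant cardinality when some fiber is finite.

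The same gap undermines your ``moreover'' clause. The proposed endomorphism-ring computation cannot be completed: adjunction plus the cleaving section only tell you that $X$ is a direct summand of $F_{\bulp}F_\lambda X$, with no control over the complementary summand, so nothing forces $\mathrm{End}_B(F_\lambda X)$ to be local. The paper's route is different: it proves, by induction along the AR quiver of $A$ (starting from the projectives $A(a,-)=F'_\lambda \tilde{B}(x,-)$, treating the injective case separately, and showing that $F'_\lambda$ carries Auslander--Reiten sequences to Auslander--Reiten sequences --- a verification that uses the adjunctions together with $F'_\lambda=F'_\rho$ from Lemma \ref{igual}), that every indecomposable $A$-module $X$ has the form $F'_\lambda N$ for an indecomposable $\tilde{B}$-module $N$; then $F_\lambda X=F_\lambda F'_\lambda N=\tilde{F}_\lambda N$ is indecomposable because $\tilde{F}_\lambda$ is a Galois push-down in the representation-finite setting, where \cite{4} and \cite{9} apply. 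Your observation that $F_\lambda=F_\rho$ for balanced coverings is on the right track (it is exactly the ingredient used in the AR-sequence verification), but without routing everything through $\tilde{B}$ neither the converse implication nor the indecomposability of $F_\lambda X$ can be established by the means you describe.
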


\begin{proof}
Let $F\colon A\to B$ be induced from $q\colon (Q',I')\to (Q,I)$ where
$A=kQ'/I'$ and $B=kQ/I$. Let $\tilde{B}=k\tilde{Q}/\tilde{I}$ be the
universal cover of $B$ and $\tilde{F}\colon \tilde{B}\to B$ the
universal covering functor. By Proposition \ref{universal} there is a covering functor
$F'\colon \tilde{B}\to A$ such that $\tilde{F}=FF'$.

(1) Assume that $B$ is a connected locally representation-finite category. Since $F$ is induced by a map of quivers with relations, then Proposition \ref{balanced} implies that $F$ is balanced. Hence Corollary \ref{cleaving} implies that F is a cleaving functor. By \cite[(3.1)]{1}, $A$ is locally representation-finite; for the sake of completness, recall the simple argument: each indecomposable $A$-module $X\in {_A\mod}$ is a direct summand of $F_{\bulp}F_\lambda X=\bigoplus\limits^n_{i=1}F_{\bulp}N^{n_i}_i$ for a finite family $N_1,\ldots,N_n$ of representatives of the isoclasses $N$ of the indecomposable $B$-modules with $N(i) \ne 0$ for some $i=F(a)$ with $X(a) \ne 0$. 

(2) Assume that $A$ is a locally representation-finite category. First we show 
that $B$ is representation-finite. Indeed, by case (1),
since $F'\colon \tilde{B}\to A$ is a covering induced by a map of a
quiver with relations, then $\tilde{B}$ is locally
representation-finite. By \cite{9}, $B$ is representation-finite. In particular, \cite{4}
implies that $\tilde{F}_\lambda$ preserves indecomposable modules, hence $F_\lambda$ and $F'_\lambda$ also preserve indecomposable modules.

Let $X$ be an indecomposable $A$-module. We shall prove that $X$ is isomorphic to $F'_\lambda N$ for some indecomposable $\tilde B$-module $N$. Since indecomposable projective
$A$-modules are of the form $A(a,-)=F'_\lambda \tilde{B}(x,-)$ for
some $x$ in $\tilde{B}$, using the connectedness of $\Gamma_A$, we may assume that there is an irreducible
morphism $Y\ \aderecha{f}\ X$ such that $Y=F'_\lambda N$ for some
indecomposable $\tilde{B}$-module $N$. If $N$ is injective, say $N=D\tilde{B}(-,j)$, there is a surjective irreducible map $(h_i): N \to \oplus_i N_i$ such that all $N_i$ are indecomposable modules and $\xymatrix{0 \ar[r] & S_j \ar[r] & N \ar[r]^{(h_i)} & \oplus_i N_i \ar[r] & 0}$ is an exact sequence. Then 
$Y=DA(-,F'j)$ and the exact sequence 
$$\xymatrix{0 \ar[r] & S_{F'j} \ar[r] & Y \ar[r]^{(F'_\lambda(h_i))} & \oplus_i F'_\lambda(N_i) \ar[r] & 0}$$ 
yields the irreducible maps starting at $Y$ (ending at the indecomposable modules $F'_\lambda (N_i)$). Therefore $X=F'_\lambda(N_r)$ for some $r$, as desired. Next, assume that $N$ is not injective and consider the
Auslander-Reiten sequence $\xi \colon 0\ \aderecha{}\ N\
\aderecha{g}\ N'\ \aderecha{g'}\ N''\ \aderecha{}\ 0$ in
${_{\tilde{B}}\mod}$. We shall prove that the push-down $F'_\lambda \xi
\colon 0\ \aderecha{}\ F'_\lambda N\ \aderecha{F'_\lambda g}\
F'_\lambda N'\ \aderecha{F'_\lambda g'}\ F'_\lambda N''\ \aderecha{}\
0$ is an Auslander-Reiten sequence in ${_A\mod}$. This implies that there 
exists a direct summand $\bar{N}$ of $N'$ such that
$X\ \aderecha{\sim}\ F'_\lambda \bar{N}$ which completes the proof of the claim.

To verify that $F'_\lambda \xi$ is an Auslander-Reiten sequence, let $h\colon F'_\lambda N\to Z$ be non-split mono in ${_A\mod}$. 
Consider $\Hom_A(F'_\lambda N,Z)\ \aderecha{\sim}
\Hom_{\tilde{B}}(N,F'_{\bulp}Z)$, $h\mapsto h'$ which is not a split
mono (otherwise, then $\Hom_{\tilde{B}}(F'_{\bulp}Z,N)\
\aderecha{\sim} \Hom_A(Z,F'_\rho N)$, $\nu \mapsto \nu'$ with $\nu
h'=1_{F'_{\bulp}Z}$. By Lemma \ref{igual}, $F'_\lambda =F'_\rho$ and $\nu'h=1_Z$).
Then there is a lifting $\bar{h}\colon N'\to F'_{\bulp}Z$ with
$\bar{h}g=h'$. Hence $\Hom_{\tilde{B}}(N',F'_{\bulp}Z)\
\aderecha{\sim}\ \Hom_A(F'_\lambda N',Z)$, $\bar{h}\mapsto \bar{h}'$
with $\bar{h}'F'_\lambda g=h$.

We show that $F_\lambda$
preserves Auslander-Reiten sequences.  Let $X$ be an indecomposable $A$-module of the form $X=F'_\lambda
N$ for an indecomposable $\tilde{B}$-module $N$. Then $F_\lambda
X=F_\lambda F'_\lambda N=\tilde{F}_\lambda N$. Since by \cite{9},
$\tilde{F}_\lambda$ preserves indecomposable modules, then $F_\lambda
X$ is indecomposable. Finally, as above, we conclude that $F_\lambda$
preserves Auslander-Reiten sequences. \cuadro
\end{proof}

\subsection{Tame representation case}\label{subsec4.3} Let $k$ be an algebraically closed field. We recall that $A$ is said
to be of {\em tame representation type\/} if for each dimension $d\in
\N$ and each object $a\in A_0$, there are finitely many
$A-k[t]$-bimodules $M_1,\ldots,M_s$ which satisfy:

(a) $M_i$ is finitely generated free as right $k[t]$-module
$i=1,\ldots,s$;

(b) each indecomposable $X\in {_A\mod}$ with $X(a)\ne 0$ and
$\dim_kX=d$ is isomorphic to some module of the form
$M_i\otimes_{k[t]}(k[t]/(t-\lambda))$ for some $i\in \{1,\ldots,s\}$
and $\lambda \in k$.

In fact, it is shown in \cite{6} that $A$ is tame if (a) and (b) are
substituted by the weaker conditions:

(a') $M_i$ is finitely generated as right $k[t]$-module $i=1,\ldots,s$;

(b') each indecomposable $X\in {_A\mod}$ with $X(a)\ne 0$ and
$\dim_kX=d$ is a direct summand of a module of the form
$M_i\otimes_{k[t]}(k[t]/(t-\lambda))$ for some $i\in \{1,\ldots,s\}$
and $\lambda \in k$.

The following statement covers claim (b) of Theorem \ref{teorema 2} in the Introduction.

\begin{theorem}
Let $F\colon A\to B$ be a balanced covering functor. If $B$ is tame,
then $A$ is tame.
\end{theorem}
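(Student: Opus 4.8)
The plan is to exploit the fact, established in Corollary \ref{cleaving}, that a balanced covering $F\colon A\to B$ is a cleaving functor, so that every $X\in {_A\mod}$ is a direct summand of $F_{\bulp}F_\lambda X$ (Theorem \ref{teorema 1}). The strategy is to transport the tameness witnesses from $B$ to $A$ through the adjoint pair $(F_\lambda, F_{\bulp})$, and then to invoke the weaker characterization of tameness via conditions (a$'$) and (b$'$) from \cite{6}. The key point is that being a direct summand is all that is needed in (b$'$), which is precisely what the cleaving property delivers.

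First I would fix a dimension $d$ and an object $a\in A_0$, and seek a finite list of $A$-$k[t]$-bimodules parametrizing the relevant indecomposables. Since $F$ is a covering functor of finite order $n=\ord(F)$ (one may reduce to the connected, finite case, passing to the relevant connected component and using Lemma \ref{lema1.3}), any indecomposable $X\in {_A\mod}$ with $X(a)\ne 0$ and $\dim_k X=d$ pushes down to $F_\lambda X\in {_B\mod}$ with $\dim_k F_\lambda X = d$ as well (the push-down redistributes the same total dimension over the fibers, so dimension is preserved) and with $F_\lambda X(Fa)\ne 0$. Decomposing $F_\lambda X$ into indecomposables, each summand $Y$ satisfies $\dim_k Y\le d$ and is supported at some object of $B$ in the image of the support of $X$. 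Applying tameness of $B$ across all dimensions $\le d$ and all finitely many objects in a fixed finite set yields finitely many $B$-$k[t]$-bimodules $M_1,\dots,M_s$ such that every such indecomposable summand $Y$ is a direct summand of some $M_i\otimes_{k[t]}(k[t]/(t-\lambda))$.

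Next I would produce the $A$-side bimodules by applying the pull-up to these witnesses: set $N_i := F_{\bulp}M_i$, viewed as an $A$-$k[t]$-bimodule via the left $A$-action coming from $F_{\bulp}$ and the unchanged right $k[t]$-action. Because $F_{\bulp}$ is exact and commutes with the base-change $-\otimes_{k[t]}(k[t]/(t-\lambda))$ (the pull-up is simply precomposition with $F$, hence $k$-linear and compatible with the right $k[t]$-structure), one gets $F_{\bulp}\bigl(M_i\otimes_{k[t]}(k[t]/(t-\lambda))\bigr)\cong N_i\otimes_{k[t]}(k[t]/(t-\lambda))$. Finitely generatedness of $N_i$ as a right $k[t]$-module follows from that of $M_i$ together with the finiteness of the fibers, giving condition (a$'$). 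Finally, for the indecomposable $X$ above, the cleaving property gives that $X$ is a direct summand of $F_{\bulp}F_\lambda X$; since $F_\lambda X$ is a sum of the $Y$'s, each a summand of some $M_i\otimes(k[t]/(t-\lambda))$, applying the additive functor $F_{\bulp}$ shows $X$ is a direct summand of a direct sum of the modules $N_i\otimes(k[t]/(t-\lambda))$, and hence a summand of one of them by Krull--Schmidt. This verifies (b$'$), so $A$ is tame.

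The main obstacle I anticipate is the careful bookkeeping needed to keep the parametrizing family \emph{finite} and to control which finitely many objects and dimensions of $B$ must be tested. One must argue that the support of $F_\lambda X$ lies over a fixed finite set of objects of $B$ (controlled by the finite support of $X$ and the fact that $F$ is onto on objects), and that the dimensions appearing are bounded by $d$, so that only finitely many tameness witnesses of $B$ are invoked; this is where the finite-order hypothesis and the explicit fiberwise description of $F_\lambda$ from the first Lemma are essential. A secondary technical point is verifying that the pull-up genuinely commutes with the specialization $-\otimes_{k[t]}(k[t]/(t-\lambda))$ at the level of bimodules rather than merely on underlying modules, but this is routine once the right $k[t]$-structure is seen to be untouched by $F_{\bulp}$.
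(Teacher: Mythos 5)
Your proposal follows the paper's own proof in all essentials: both arguments rest on Corollary \ref{cleaving} (so that each indecomposable $X$ is a direct summand of $F_{\bulp}F_\lambda X$), on the facts that $F_\lambda$ preserves total dimension and that $F_\lambda X(Fa)\ne 0$ when $X(a)\ne 0$, on pulling the tameness witnesses $M_i$ of $B$ up to $F_{\bulp}M_i$, on the isomorphism $F_{\bulp}\bigl(M_i\otimes_{k[t]}k[t]/(t-\lambda)\bigr)\cong F_{\bulp}M_i\otimes_{k[t]}k[t]/(t-\lambda)$, and on the criterion (a$'$)--(b$'$) of \cite{6}. The organizational differences are minor: the paper invokes tameness of $B$ only at the single object $Fa$ and dimensions $\le d$ (by Krull--Schmidt the indecomposable summand $Y$ of $F_\lambda X$ with $X$ a summand of $F_{\bulp}Y$ automatically satisfies $Y(Fa)\ne 0$, since $X(a)\ne 0$), whereas you track \emph{all} indecomposable summands of $F_\lambda X$, invoke tameness of $B$ over a finite set of objects, and apply Krull--Schmidt only at the end; your route therefore carries the extra (true, but not proved in your sketch) claim that the supports of all relevant $X$ lie in a fixed finite set, which follows from connectedness of supports of indecomposables together with local boundedness.

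The one step that would fail as written is the parenthetical reduction to finite order. Lemma \ref{lema1.3} says the fibers have constant cardinality \emph{provided one of them is finite}; it does not produce finiteness, and passing to a connected component does not help: the paper's own Example (\ref{subsec1.5}.d) is a balanced covering with both categories connected in which the (unique) fiber is infinite. You use finiteness of the fibers precisely to conclude that $N_i=F_{\bulp}M_i$ is finitely generated as a right $k[t]$-module, i.e.\ condition (a$'$), and for an infinite fiber this genuinely fails: the total space of $F_{\bulp}M_i$ is then an infinite direct sum of nonzero $k[t]$-modules. So your verification of (a$'$) is incomplete exactly for coverings of infinite order. In fairness, the paper's proof asserts that (a$'$) and (b$'$) hold without comment and is open to the same objection; a repair is to cut $F_{\bulp}M_i$ down to a finitely generated quotient supported on the finitely many objects of $A$ that can carry an indecomposable module of dimension $d$ not vanishing at $a$ (the complementary sub-bimodule is killed by the projection onto $X$), an extra argument that neither you nor the paper supplies.
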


\begin{proof}
Let $a\in A_0$ and $d\in \N$. Let $M_1,\ldots,M_s$ be the
$B-k[t]$-bimodules satisfying (a) and (b): each indecomposable $M\in
{_B\mod}$ with $M(Fa)\ne 0$ and $\dim_kM\le d$ is isomorphic to some
$M_i\otimes_{k[t]}(k[t]/(t-\lambda))$ for some $i\in \{1,\ldots,s\}$
and $\lambda \in k$. \\
By Corollary \ref{cleaving} each indecomposable $X\in {_A\mod}$ with $X(a)\ne 0$ and
$\dim_kX=d$ is a direct summand of some
$F_{\bulp}(M_i\otimes_{k[t]}(k[t]/(t-\lambda)))$, which is isomorphic to $F_{\bulp}M_i\otimes_{k[t]}(k[t]/(t-\lambda))$,  for some $i\in
\{1,\ldots,s\}$ and $\lambda \in k$. Hence $A$ satisfies conditions (a') and (b'). \cuadro
\end{proof}

\noindent Jos\'e A. de la Pe\~na; Instituto de Matem\'aticas, UNAM. \\
Cd. Universitaria, M\'exico 04510 D.F.\\

{\it E-mail address:  jap@matem.unam.mx}

\medskip

\noindent Mar\'\i a Julia Redondo; Instituto de Matem\'atica,
Universidad Nacional del Sur. \\
Av. Alem 1253, (8000) Bah\'\i a Blanca, Argentina.\\

{\it E-mail address:  mredondo@criba.edu.ar}

\end{document}